\newtheorem{teorema}{Theorem}
\newtheorem{proposicion}{Proposition}
\newtheorem{corolario}{Corollary}
\newtheorem{lema}{Lemma}
\newtheorem{nota}{Remark}
\theoremstyle{definition}
\newtheorem{ejemplo}{Example}
\newcommand{\N}{\mathbb{N}}
\newcommand{\tZ}{\widetilde{Z}}
\newcommand{\F}{\mathcal{F}}
\newcommand{\V}{\text{Var}}
\newcommand{\trho}{\tilde{\rho}}
\newcommand{\tmu}{\tilde{\mu}}
\newcommand{\ts}{\tilde{s}}
\newcommand{\tX}{\widetilde{X}}
\newcommand{\tp}{\tilde{p}}
\newcommand{\tz}{\tilde{z}}
\newcommand{\tr}{\tilde{r}}
\newcommand{\tW}{\widetilde{W}}
\newcommand{\tphi}{\tilde{\varphi}}
\newcommand{\indi}[1]{\mathds{1}_{#1}}
\newcommand{\vphi}{\varphi}
\newcommand*\samethanks[1][\value{footnote}]{\footnotemark[#1]}
\newcommand{\cuad}{\begin{flushright}\vspace{-2ex}$\Box$\vspace{-2ex}\end{flushright}}
\newenvironment{Prf}[1][\unskip]{%
\par
\noindent
{\textbf{Proof of #1}}\newline
\vspace{-2ex}\noindent{}\newline}\cuad
\begin{document}
\pagenumbering{arabic}

\title{Predator-prey density-dependent branching processes\footnote{This is the preprint version of the following paper published in the journal Stochastic Models (see the official journal website at \url{https://doi.org/10.1080/15326349.2022.2032755}):
Cristina Gutiérrez \& Carmen Minuesa (2022). Predator–prey density-dependent
branching processes, Stochastic Models, 39:1, 265-292, DOI: 10.1080/15326349.2022.2032755}}
\author{Cristina Guti\'errez\thanks{Both authors contributed equally to this work.}\ \footnote{Department of Mathematics, University of Extremadura, 10071, C\'aceres, Spain. E-mail address: \url{cgutierrez@unex.es}. ORCID: 0000-0003-1348-748X.}  \and Carmen Minuesa\samethanks[1]\  \footnote{Department of Mathematics, University of Extremadura, 06006, Badajoz, Spain. E-mail address: \url{cminuesaa@unex.es}. ORCID: 0000-0002-8858-3145.}\ \footnote{Corresponding author.} }
\date{ }
\maketitle

\begin{abstract}
Two density-dependent branching processes are considered to model predator-prey populations. For both models, preys are considered to be the main food supply of predators. Moreover, in each generation the number of individuals of each species is distributed according to a binomial distribution with size given by the species population size and probability of success depending on the density of preys per predator at the current generation. The difference between the two proposed processes lies in the food supply of preys. In the first one, we consider that preys have all the food they need at their disposal  while in the second one, we assume that the natural resources of the environment are limited and therefore there exists a competition among preys for food supplies. Results on the fixation and extinction of both species as well as conditions for the coexistence are provided for the first model. On the event of coexistence of both populations and on the prey fixation event, the limiting growth rates are obtained. For the second model, we prove that the extinction of the entire system occurs almost surely. Finally, the evolution of both models over the generations is illustrated by simulated examples. Those examples validate our analytical findings.
\end{abstract}

\noindent {\bf Keywords: }{predator-prey model; branching process; extinction; growth rate; carrying capacity.}

\noindent {\bf MSC: }{60J80, 60J85.}

\section{Introduction}\label{sec:Introduction}

As is well known, predator–prey models study the trophic interactions between two or more animal species. Since the introduction of the first models by Lotka and Volterra (see \cite{Lotka} and \cite{Volterra}), the literature in this field has been noticeably increased with the contribution of many authors who try to adapt their models to the peculiarities observed in the real world. The majority of the models are deterministic based on ordinary differential equations. However, predator-prey systems constitute a natural context where evolutionary branching patterns may appear and therefore the predator-prey interactions could be also modelled by branching processes.

In this context several publications have dealt with the problem of modelling predator–prey systems both in continuous time (see, for instance, \cite{Hitchcock-1986} or \cite{Ridler-Rowe-1988}) and discrete time. In particular, in discrete time, \cite{Coffey-Buhler-1991} is the pioneer work. 
The model assumes that, at each generation, the number of predators is independent of the size of the population of preys, and the number of preys is given by the number of offspring of this species minus the number of preys that have been captured by the predators. Later, necessary and sufficient conditions for the fixation of both populations are studied for this model in \cite{Alsmeyer-1993}. Recently, a predator-prey two-sex branching process with promiscuous mating is introduced in \cite{GutierrezMinuesa2020} to model the interaction of predator and prey populations assuming that both species are formed by females and males having sexual reproduction. Necessary and sufficient conditions for the extinction of the population, the fixation of one of the species and the coexistence of both of them are also provided in this paper. 



The main aim of the present paper is to introduce more realistic discrete-time predator-prey branching models assuming that the species proliferate through asexual reproduction. Thus, we extend the predator-prey literature in the field of branching processes providing a useful framework to analyse the long-term evolution of this kind of populations.
One of the drawbacks of the model introduced in \cite{Coffey-Buhler-1991} is that the number of predators is independent of the size of the population of preys. However, it is well-known in predator-prey models that the predator behaviour changes depending on the number of preys in the population. The first contribution of the current manuscript is therefore to introduce models that include the idea of that, in each generation, the number of predators depends on the proportion of preys per predator. 
A first approach that considers this dependency has been studied in \cite{GutierrezMinuesa2021} for a two-sex predator-prey branching process. In this paper and in the present manuscript, we assume that the probability of survival of the individuals of each species depends on the density of preys per predator and not only on the number of individuals of each species in absolute terms. Moreover, we consider the existence of certain population constant $\gamma$ that enables both populations to remain stable. This quantity could be regarded as a counterpart of the second equilibrium point in the predator-prey system described via ordinary differential equations, while the first equilibrium point would correspond to the ultimate extinction of the system. An oscillating behaviour of the density of preys per predator around this constant  $\gamma$ over the generations leads to the fluctuation of the population sizes of both species and their survival at least for a long period of time before the extinction of one of the species.
 


Another handicap of the models in \cite{Coffey-Buhler-1991} and \cite{GutierrezMinuesa2020} is that the prey population grows indefinitely in absence of predators. Although this is a behaviour observed in the simplest Lotka-Volterra model, an exponential growth does not seem to be a realistic feature because the limited environmental resources force preys to compete for available food. This problem has been also tackled for the deterministic predator-prey models where some modifications based on the introduction of a carrying capacity parameter were incorporated to describe these situations (see equation (1) in \cite{Abrams-2000} for the continuous-time model and equation (2) for the discrete-time version). Following the same argument, in the branching process setting we introduce a predator-prey model with a carrying capacity for the prey population and study the extinction problem. This constitutes the second novelty of the present paper. 

Two branching models arise from the ideas previously exposed. Both models enable us to describe the evolution, generation by generation, of the number of predators and preys in certain environment taking into account the capacity of consumption of predators and environmental restrictions. The definition of both models consists of two stages which are repeated in each generation: a reproduction phase, where the individuals of each species give birth to their offspring, and a control phase, where the survival of those individuals is threatened by the interaction between both species and also by the environmental conditions in the second model.  


Moreover, the evolution of the number of individuals of each species over successive generations is also studied in the paper. Precisely, for the first model, we prove that predator fixation is not possible and prey fixation occurs with positive probability. Moreover, we give conditions for the coexistence of both species. We also provide the limiting growth rates for the prey population on the prey fixation event and for both population on the event of coexistence. For the second model, we prove that the ultimate extinction of the population occurs almost surely.

Apart from this introduction, the paper is organised in 6 sections and 3 appendixes. In Section~\ref{sec:Definition} we provide the formal definition of the model and an intuitive interpretation of the assumptions. In Section~\ref{sec:Extinction} we study the fixation of each species, the possibility of the ultimate extinction of the whole predator-prey system, and coexistence of both species. Section~\ref{sec:growth rates} is devoted to the analysis of the limiting behaviour of the process. In Section~\ref{sec:carrying} we introduce a modification of the process which enables us to model predator-prey systems where the food resources for the prey population are limited. We summarise the main results of this work in Section~\ref{sec:Discussion}. The proofs of all the results are collected in three appendixes to ease the readability of the paper. Each of these appendixes gathers the proofs of the results in Sections~\ref{sec:Extinction}, \ref{sec:growth rates}, and \ref{sec:carrying}, respectively.

In the following, all the random variables (r.v.s) are defined on the same probability space $(\Omega,\mathcal{A},P)$. Moreover, we write $\N_0=\N\cup\{0\}$, and let $\indi{A}$ denote the indicator function of the set $A$.

\bigskip

\section{The probability model}\label{sec:Definition}

In this section, we introduce a two-type and density-dependent branching process aiming at modelling the evolution of the number of predators and preys that cohabit in a specific area and where the preys are the main food resource for predators. As described in the introduction, in the evolution of the process we distinguish two phases: the control stage, which models the interaction between the species, and the reproduction phase, when each species gives birth to their offspring. We shall start with the formal definition of the model and next, we provide the interpretation of the assumptions.

A \emph{predator-prey density-dependent branching process} (PPDDBP) is a discrete time stochastic process $\{(Z_n,\tZ_n)\}_{n\in \mathbb{N}_0}$ defined as:
\begin{equation}\label{def:model-total-indiv}
(Z_0,\tZ_0)=(z_0,\tilde{z}_0),\qquad (Z_{n+1},\tZ_{n+1})=\left(\sum_{i=1}^{\varphi_n(Z_{n},\tZ_n)}X_{ni},\sum_{i=1}^{\tphi_n(Z_n,\tZ_{n})}\tX_{ni}\right),\quad n\in\N_0,
\end{equation}
where $(z_0,\tilde{z}_0) \in \mathbb{N}^2$, the empty sums are considered to be 0, and the r.v.s satisfy the following conditions:
\begin{enumerate}[label=\emph{(\roman*)},ref=\emph{(\roman*)}]
\item The r.v.s of the family $\{X_{ni},\tX_{ni},\varphi_{n}(z,\tilde{z}),\tphi_{n}(z,\tilde{z}): n,z,\tilde{z}\in\N_0, i\in\N\}$ are independent and non-negative integer valued.\label{cond:independence}
\item The r.v.s of the family $\{X_{ni}: n\in\N_0, i\in\N\}$ are independent and identically distributed (i.i.d.) with probability distribution $p=\{p_k\}_{k\in \mathbb{N}_0}$, mean $\mu$, and variance $\sigma^2$. \label{cond:rep-predator}
\item The r.v.s of the family $\{\tX_{ni}: n\in\N_0, i\in\N\}$ are i.i.d  with probability distribution $\tp=\{\tp_k\}_{k\in \mathbb{N}_0}$, mean $\tmu$, and variance $\tilde{\sigma}^2$. \label{cond:rep-prey}
\item For $z,\tilde{z}\in\N$ and $n\in\N_0$, the variable $\varphi_{n}(z,\tilde{z})$ follows a binomial distribution with size $z$ and probability of success $r(\tilde{z}/z)$, where $r:[0,\infty)\to (0,1)$ is a continuous and strictly increasing function.\label{cond:binomial-phi}
\item For $z,\tilde{z}\in\N$ and $n\in\N_0$, the variable $\tphi_{n}(z,\tilde{z})$ follows a binomial distribution with size $\tilde{z}$ and probability of success $\tr(\tilde{z}/z)$, where $\tr:[0,\infty)\to (0,1)$ is a continuous and strictly increasing function.\label{cond:binomial-tphi}
\end{enumerate}
Moreover, we introduce some additional conditions on the functions $r(\cdot)$ and $\tr(\cdot)$. More precisely, we assume the existence of some constants $0<\rho_1<\rho_2<1$, $0<\trho_1<\trho_2<1$, and $\gamma>0$ satisfying:
\begin{enumerate}[label=\emph{(\roman*)},ref=\emph{(\roman*)},start=6]
\item $\lim_{z\to \infty}r(z)=\rho_2$ and $\lim_{z\to \infty}\tr(z)=\trho_2$.\label{cond: s y ts limits in infty}
\item $r(0)=\rho_1$ and $\tr(0)=\trho_1$.\label{cond: s y ts limits in 0}
\item The distribution of the r.v. $\varphi_n(z,0)$ is binomial with parameters $z$ and $\rho_1$, and $\tphi_n(z,0)=0$ a.s., for each $n, z\in\N_0$. \label{cond:0-preys}
\item The distribution of the r.v. $\tphi_n(0,\tilde{z})$ is binomial with parameters $\tilde{z}$ and $\trho_2$, and $\varphi_n(0,\tilde{z})=0$ a.s., for each $n, \tilde{z}\in\N_0$. \label{cond:0-predators}
\item $r(\gamma)=1/\mu$, and $\tr(\gamma)=1/\tmu$.\label{cond: s y st in mu}
\end{enumerate}

We note that the assumptions \ref{cond: s y ts limits in infty}, \ref{cond: s y ts limits in 0}, and \ref{cond: s y st in mu} imply:
\begin{equation}\label{eq: rho1m<1<rho2m}
\rho_1\mu<1<\rho_2\mu \quad \mbox{ and } \quad \trho_1\tmu<1<\trho_2\tmu,
\end{equation}
and consequently, $\mu>1$ and $\tmu>1$. Thus, henceforth we assume that the parameters of the model satisfy \eqref{eq: rho1m<1<rho2m}. Examples of functions $r(\cdot)$ and $\tr(\cdot)$ satisfying conditions \ref{cond: s y ts limits in infty}, \ref{cond: s y ts limits in 0}, and \ref{cond: s y st in mu} are
\begin{align*}
g_1(x)=(\rho_2-\rho_1)(1-k^{-x})+\rho_1,\quad \mbox{ with }\quad k=\left(\frac{\rho_2\mu-\rho_1\mu}{\rho_2 \mu -1}\right)^{1/\gamma}>1,
\end{align*}
or \begin{align*}
g_2(x)=\frac{\rho_2x^l+\rho_1}{x^l+1},\quad\mbox{ with }\quad l=\frac{\log((1-\rho_1\mu)/(\rho_2 \mu -1))}{\log (\gamma)},
\end{align*}
whenever $l>0$. 

\medskip

Before giving the interpretation of the previous assumptions in terms of the populations, we remark that it is easy to check that the process $\{(Z_n,\tZ_n)\}_{n\in\N_0}$ is a discrete time homogeneous Markov chain because the population size of both the predator and the prey population in a certain generation only depends on the population sizes at the previous generation. Moreover, the states of this bivariate process are two-dimensional vectors having non-negative integer coordinates. We also note that $(0,0)$ is an absorbing state and the remaining states are transient. This can be easily verified by noticing that if the population size of one of the species is zero, then  that population is extinct forever. 

\subsection*{Biological interpretation}

Intuitively, $Z_n$ and $\tZ_n$ denote the total number of predators and preys at the $n$-th generation, respectively, and $X_{ni}$ represents the number of children of the $i$-th predator at generation $n$ whereas $\tX_{ni}$ is the number of children of the $i$-th prey at generation $n$. Moreover, $\varphi_{n}(z,\tilde{z})$ and $\tilde{\varphi}_{n}(z,\tilde{z})$ are the number of predators and preys that survive in presence of the other species and are able to give birth to their offspring at generation $n$ if there are $Z_{n}=z$ predators and $\tZ_{n}=\tilde{z}$ preys in the ecosystem.

In the evolution of the process we distinguish two phases at every generation. The first one is the \textit{control phase} that enables us to model the interaction between the species. In this phase, the number of predators and preys that are able to give birth could be reduced due to the interplay between the two populations; thus, some preys could die after being captured by the predators, and some predators could die of starvation due to their incapacity to hunt enough preys for their survival. This is modelled through assumptions \ref{cond:binomial-phi} and \ref{cond:binomial-tphi} in the process. Moreover, if we assume that the survival of each individual is independent of the survival of the others at the same generation and the probability of survival remains constant within the same species, then the binomial distribution is the distribution that one should expect for the control variables. We note that if there are $z$ predators and $\tilde{z}$ preys in the ecosystem, then $r(\tilde{z}/z)$ and $\tr(\tilde{z}/z)$ are the probabilities that a predator and a prey survive, respectively, and these probabilities are functions of the density of preys per predator, $\tilde{z}/z$.

Next, the control phase is followed by the \textit{reproduction phase}, when all the survivor individuals of both populations reproduce independently of the others, independently of the number of progenitors, and following the same probability distribution within each species (conditions \ref{cond:independence}, \ref{cond:rep-predator} and \ref{cond:rep-prey}). The probability laws $p=\{p_k\}_{k\in \mathbb{N}_0}$ and $\tilde{p}=\{\tilde{p}_k\}_{k\in \mathbb{N}_0}$  are known as the \textit{reproduction laws} or \emph{offspring distributions of the predator and prey populations}, respectively, and we assume that their means and variances are positive and finite. Moreover, to avoid trivial situations, we assume that $p_0+p_1<1$ and $\tilde{p}_0+\tilde{p}_1<1$. Finally, the sum of all the children of each species gives us the population size of predators and preys in the following generation $n+1$, $Z_{n+1}$ and $\tZ_{n+1}$, respectively.

%

The remaining assumptions enable us to model situations that occur in real systems. For instance, if the proportion of preys per predator increases, then the predators will have more preys to feed themselves and the preys will have a greater chance to escape from predators. As a result, the probability of survival of both species should increase. This is obtained from the fact that the functions $r(\cdot)$ and $\tilde{r}(\cdot)$ are strictly increasing. Next, in conditions \ref{cond: s y ts limits in infty}, and \ref{cond: s y ts limits in 0}, the parameter $\rho_1$ denotes the survival probability of a predator in absence of preys and the parameter $\trho_2$ represents the probability of survival of a prey in absence of predators. We note that $\rho_1>0$ covers the situations where the predators have other food (but not primary) resources. 
The condition $\rho_2<1$ enables us to include the possibility  of hunting on predators, their capture by their own predators, reproduction disability, or other problems within the predator population. In an analogous way, the assumption $\trho_2<1$ means that if there is no predator, preys might die due to other reasons.

Finally, the parameter $\gamma$ can be seen as the prey density per predator that makes that both populations stay stable according to condition \ref{cond: s y st in mu} and Proposition \ref{prop:moments-PPBP}~\ref{prop:moments-PPBP-i} (see Appendix). Indeed, if we have $z$ predators and $\tz$ preys at certain generation $n$, and $\gamma$ represents such a proportion of preys per predator, then to obtain that the expected number of predators in the following generation to be equal to the current number of individuals of this species, necessarily it must be $r(\gamma)=1/\mu$. This is because if we set $\tz/z=\gamma$, then
$$E[Z_{n+1}|Z_n=z,\tZ_n=\tz]=\mu zr(\tz/z)=z.$$
Moreover, given that the function $r(\cdot)$ is strictly increasing if $\tz/z>\gamma$, then the mean population size at the next generation increases with respect to the current population size of predators and this population shows a supercritical behaviour; however, if $\tz/z<\gamma$, then we expect a decline in the population size at the following generation, that is, we expect that the population behaves subcritically. The first case covers the situation where the predator population has enough food resources to survive and then we expect an increase in the population size, while the second one is the case of starvation and deaths within this population due to the shortage of food resources and consequently a drop in the population is expected. The same reasoning justifies the condition \ref{cond: s y st in mu} for the prey population. These facts explain the possibility of oscillations in our model, as occurs in branching processes where we observe fluctuations of the population sizes such as in the branching process with carrying capacity (see \cite{Klebaner-1993}).


%

%

\bigskip

To show the usual asymptotic behaviour of these processes we provide the following two examples. The simulations were run  with the statistical software \texttt{R} (see \cite{R}).

\begin{ejemplo}\label{ej:exponential}
In our first example we simulated the first $n=40$ generations of a PPDDBP starting with $Z_0=5$ predators and $\tZ_0=5$ preys. The offspring distribution of both populations is geometric, with parameter $p=1/3$, in the case of the predator population and with parameter $\tilde{p}=2/5$, in the case of the prey population. This gives us offspring means of $\mu=2$ and $\tmu=1.5$, for the predators and preys, respectively. We also set $\gamma=0.5$, and the remaining parameters are $\rho_1=0.1$, $\rho_2=0.6$, $\trho_1=0.15$, and $\trho_2=0.9$. First, we observe the exponential growth of the number of individuals in both species (see Figure~\ref{fig:path-density-sit1-exponential}, left), faster in the case of the prey population due to the fact that we have $\rho_2\mu=1.2<\trho_2\tmu=1.35$. This will be justified by Theorems~\ref{thm:coexistence-positive} and \ref{teor:GR coexistence} below. The same behaviour, but at different rates, is observed for the number of predators and prey survivors and as a consequence, the results are omitted. Next, we plotted the evolution of the density of preys per predator over the generations in Figure~\ref{fig:path-density-sit1-exponential} (centre), where we observe that it is eventually above $\gamma$ and indeed, it grows indefinitely. Finally, we illustrate that in this situation there is a positive probability of coexistence (see Figure~\ref{fig:path-density-sit1-exponential}, right), as Theorem~\ref{thm:coexistence-positive} states. These last results are based on a simulation of $10^4$ PPDDBPs following the model described above.

\begin{figure}[H]
\centering\includegraphics[width=0.32\textwidth]{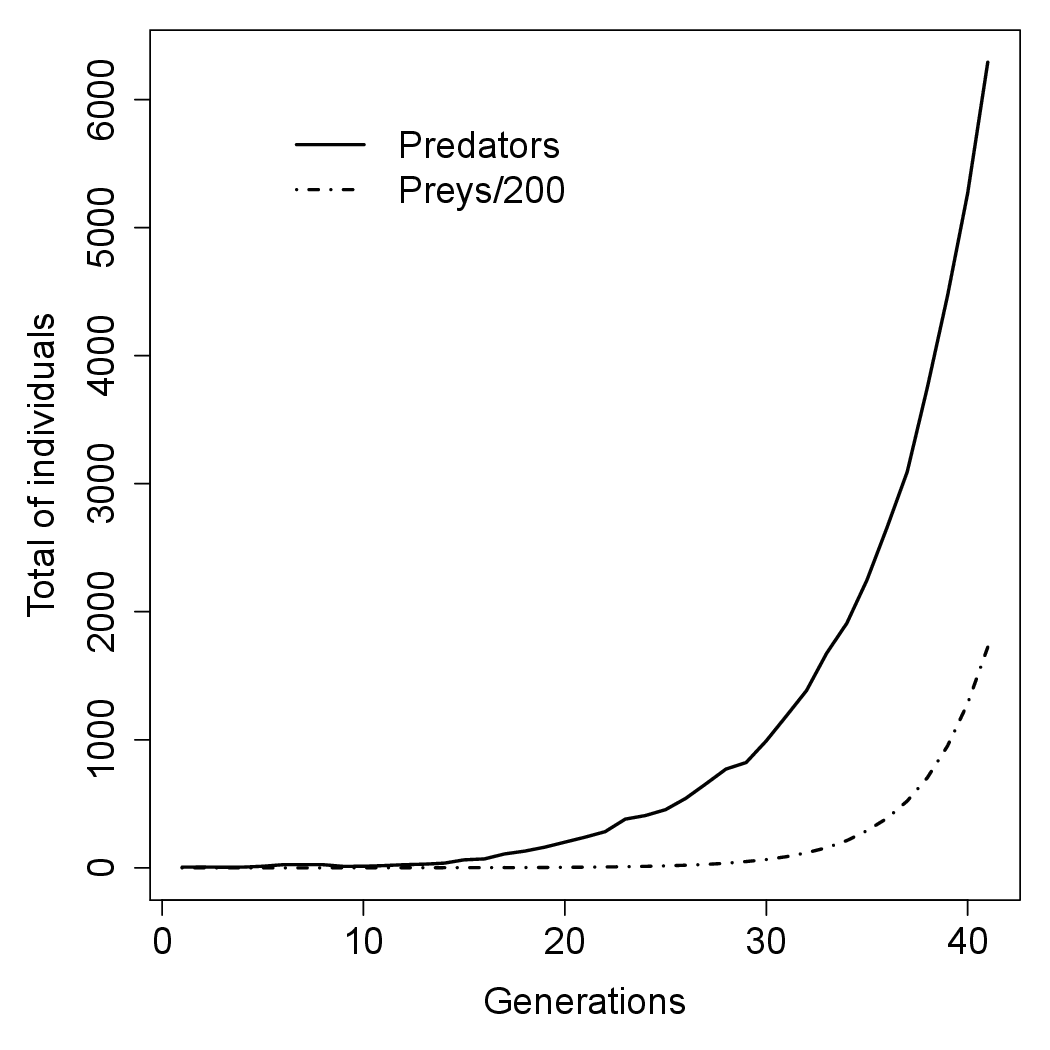}
\includegraphics[width=0.32\textwidth]{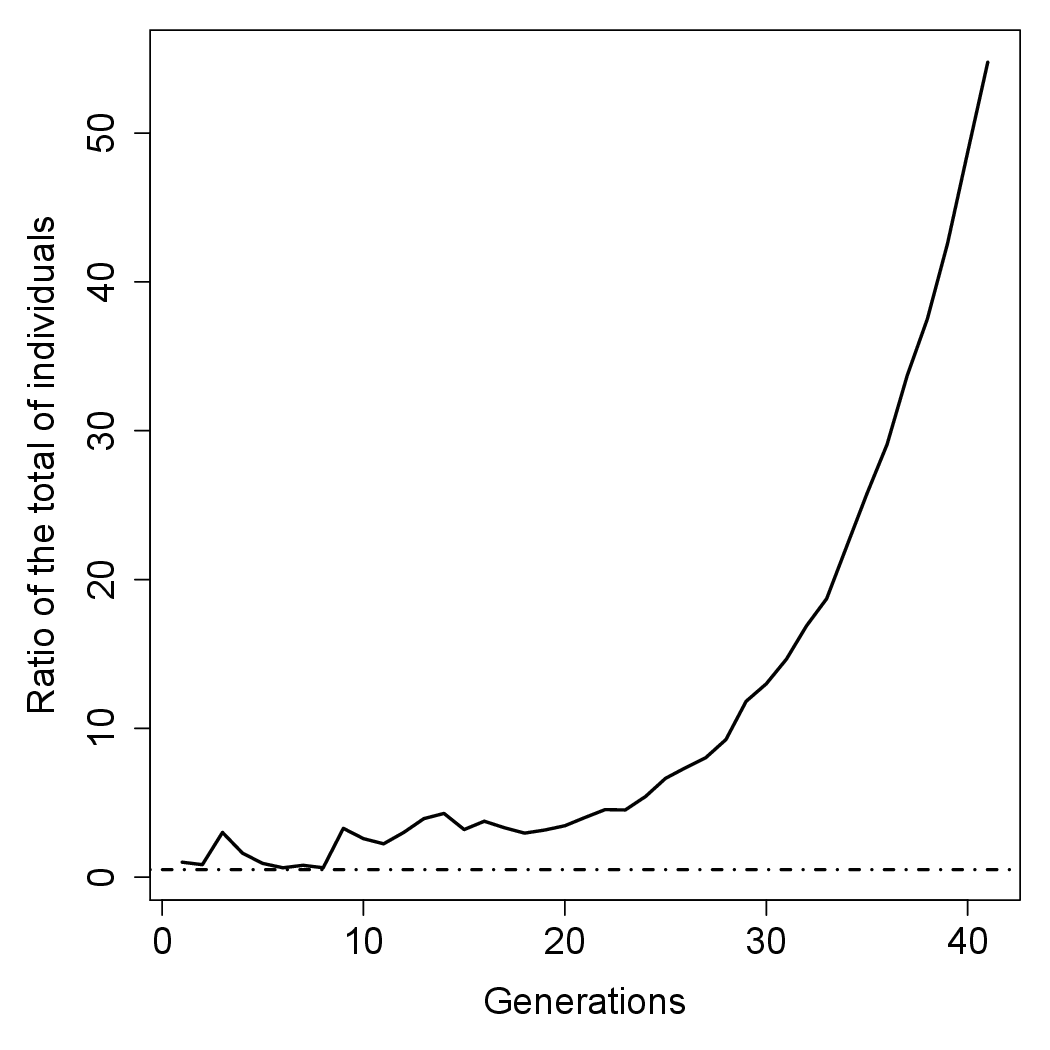}
\includegraphics[width=0.32\textwidth]{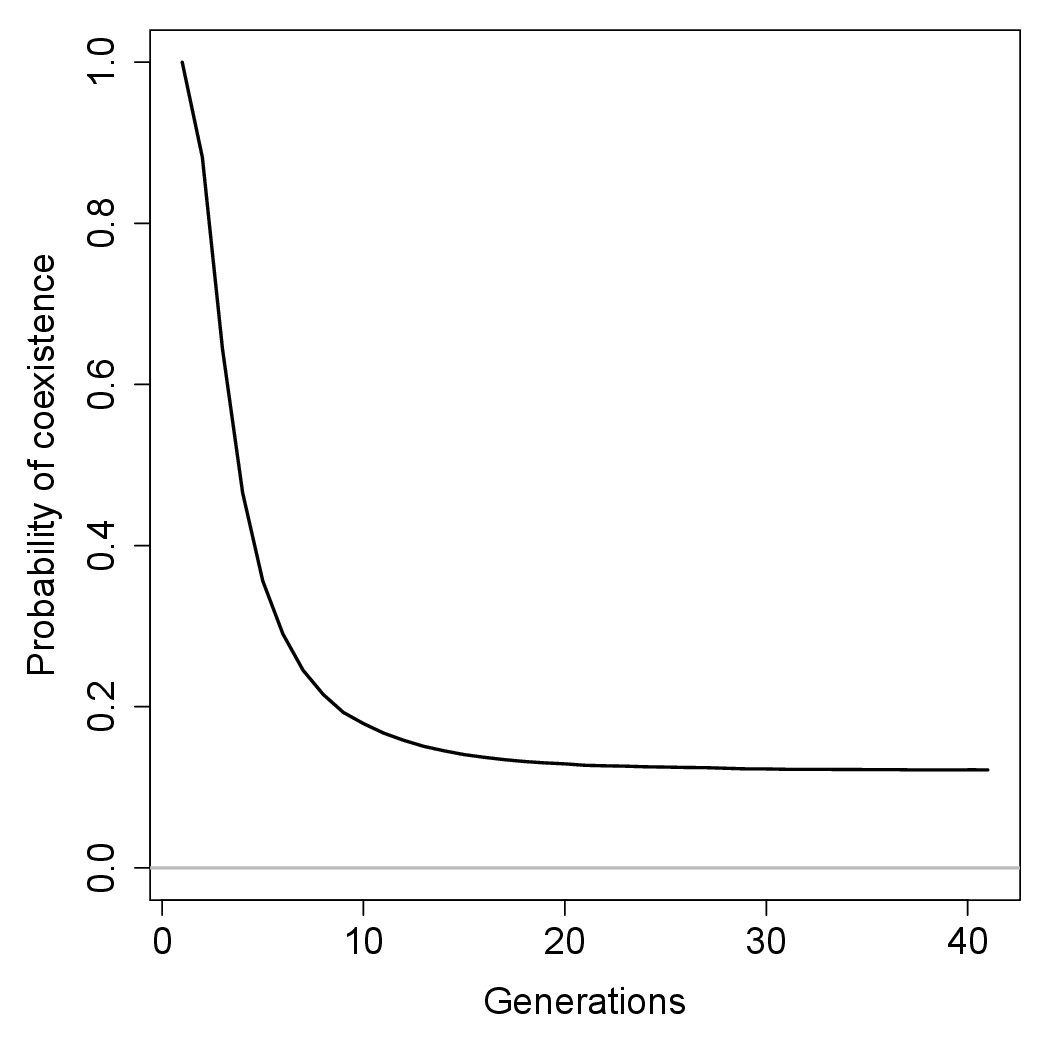}
\caption{Left: evolution of the number of predators (solid line) and preys (dashed-dotted line). Centre: evolution of the ratio of the total of preys to the total of predators before the control phase (black line). Horizontal line represents the value of $\gamma$. Right: evolution of the probability of coexistence of both species over the generations. }\label{fig:path-density-sit1-exponential}
\end{figure}
\end{ejemplo}

\begin{ejemplo}\label{ej:oscillating}
In our second example, we consider again populations starting with 5 individuals and whose reproduction laws are geometric. In this case, the parameter of this distribution is $p=2/5$ for the predators and $\tilde{p}=1/3$ for the prey population, which results in offspring means of $\mu=1.5$ and $\tmu=2$. Moreover, we fix $\rho_1=0.15$, $\rho_2=0.9$, $\trho_1=0.1$, and $\trho_2=0.6$, with the parameter $\gamma=2$. We simulated the first $n=10^3$ generations of a trajectory of this model, where we observe that both populations get extinct at generation 974.  We note that in this situation $\rho_2\mu=1.35>\trho_2\tmu=1.2$. In Figure~\ref{fig:path-density-sit2-oscillating} (left) we illustrate the oscillating behaviour of the population sizes of each species. Moreover, contrary to the previous example, fluctuations around the value $\gamma=2$ are also observed in Figure~\ref{fig:path-density-sit2-oscillating} (centre), where we show how the density of preys per predators changes over the generations. A simulation study, omitted in this paper for sake of brevity, seem to indicate that this sort of oscillating behaviour cannot occur forever in this situation. To support this conjecture, we estimated the probability of coexistence based on the simulation of $10^4$ PPDDBPs following the previous model and we show how fast this probability converges to 0 as the number of generations increases in Figure~\ref{fig:path-density-sit2-oscillating} (right).

\begin{figure}[H]
\centering\includegraphics[width=0.32\textwidth]{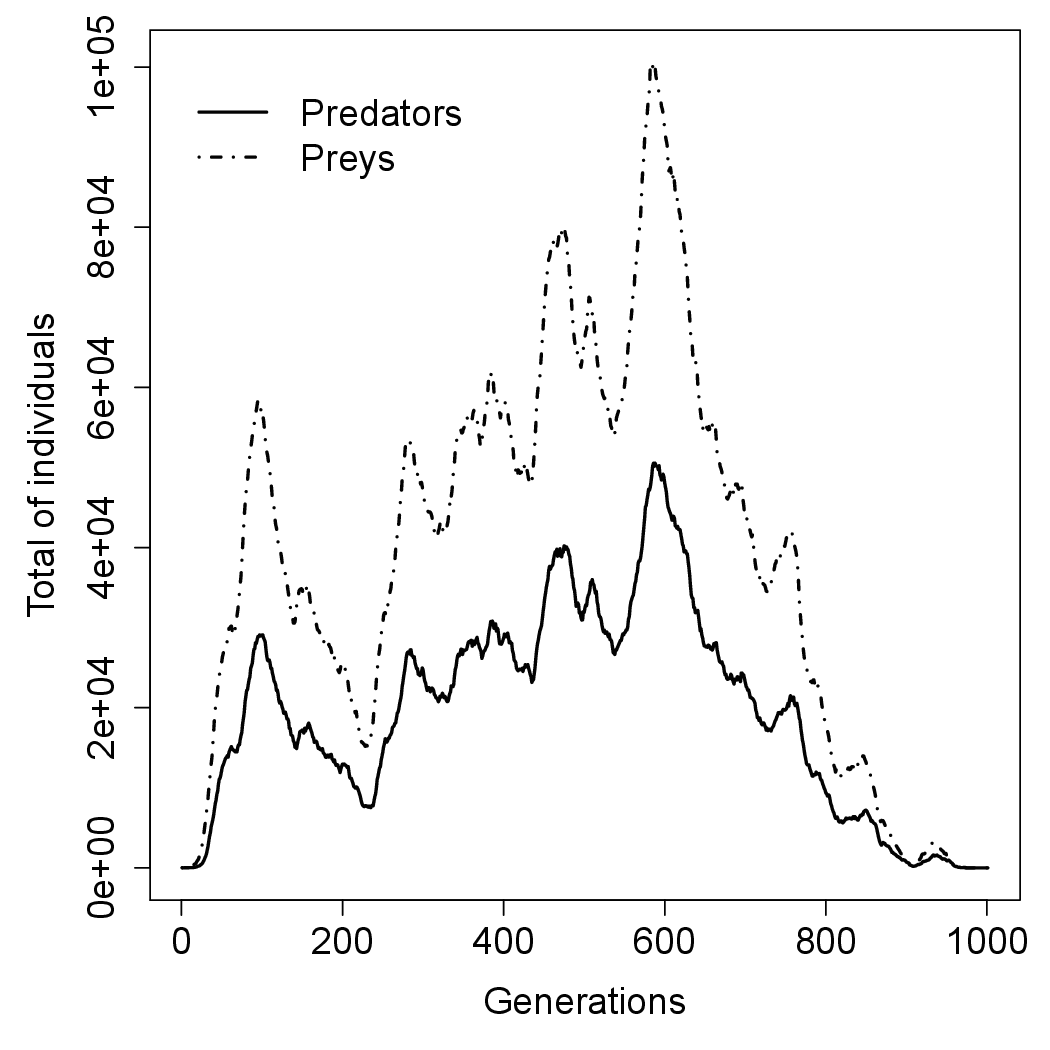}
\includegraphics[width=0.32\textwidth]{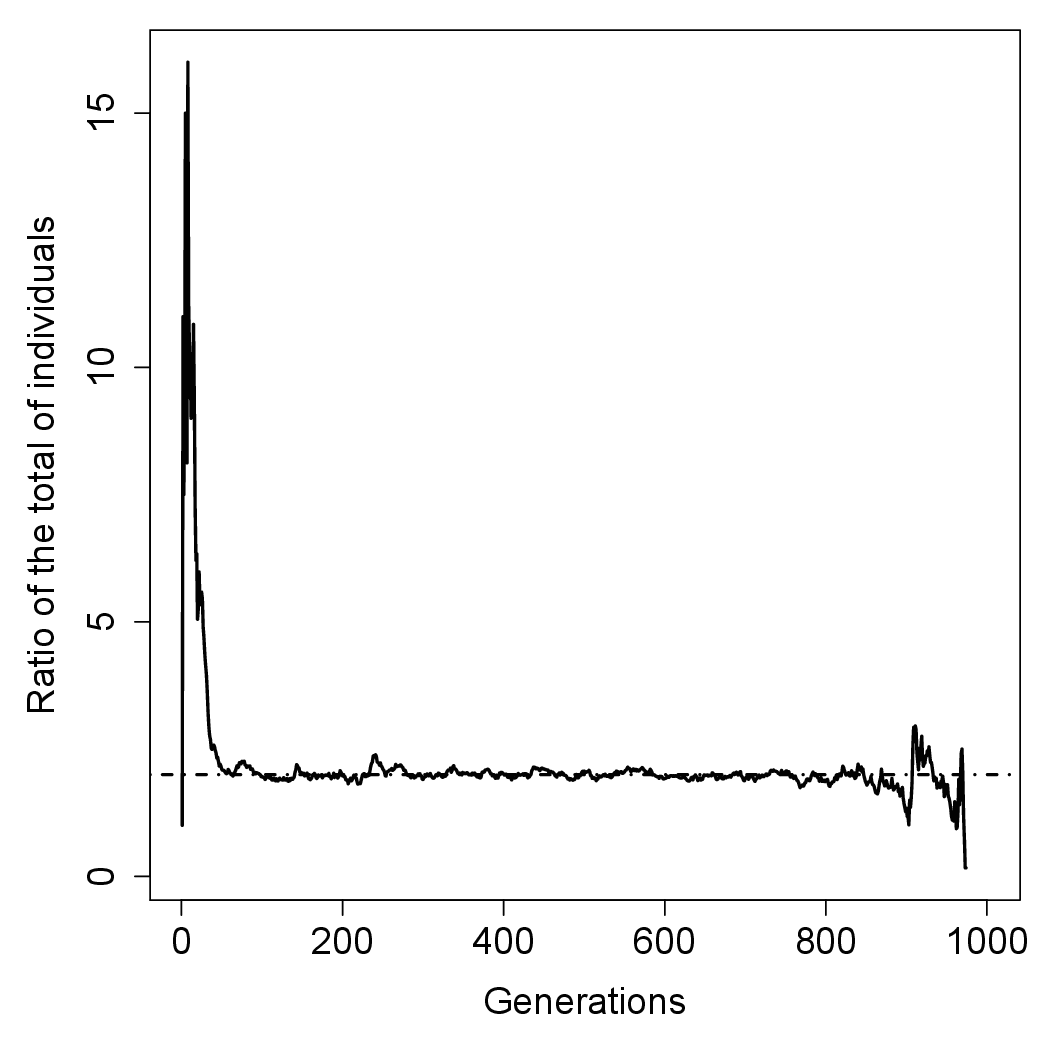}
\includegraphics[width=0.32\textwidth]{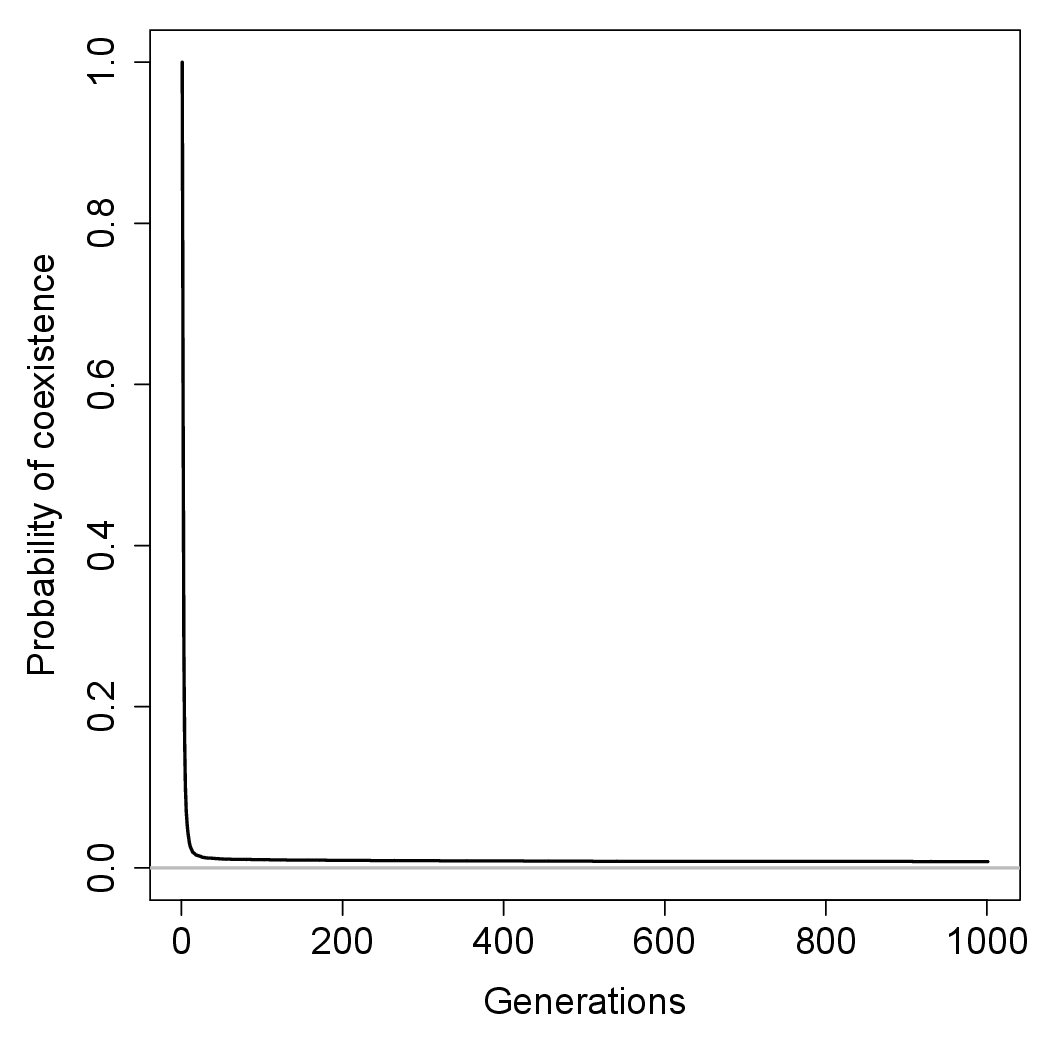}
\caption{Left: evolution of the number of predators (solid line) and preys (dashed-dotted line). Centre: evolution of the ratio of the total of preys to the total of predators before the control phase (black line). Horizontal line represents the value of $\gamma$. Right: evolution of the probability of coexistence of both species over the generations. }\label{fig:path-density-sit2-oscillating}
\end{figure}
\end{ejemplo}

\bigskip

%
%
%

Before finishing this section, we establish the analogous of the extinction-explosion dichotomy, typical in branching process theory. The proof is omitted since it follows the same steps as the proof of Proposition 8 in \cite{GutierrezMinuesa2020}. 

\begin{proposicion}\label{prop:extexp}
Let $\{(Z_n,\tZ_n)\}_{n\in\N_0}$ be a PPDDBP. Then:
\begin{enumerate}[label=(\roman*),ref=\emph{(\roman*)}]
\item $P(\liminf_{n\to\infty}(Z_n,\tZ_n)=(k,\tilde{k}))=0$, and $P(\limsup_{n\to\infty}(Z_n,\tZ_n)=(k,\tilde{k}))=0$, for each $(k,\tilde{k})\in\N_0^2\backslash\{(0,0)\}$.\label{prop:extexp-i}
\item $P(Z_n\to 0,\tZ_n\to 0)+ P(Z_n \to\infty,\tZ_n\to \infty)+P(Z_n\to \infty,\tZ_n\to 0)+
P(Z_n \to 0,\tZ_n\to\infty)=1$.\label{prop:extexp-ii}
\end{enumerate}
\end{proposicion}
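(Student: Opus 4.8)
The plan is to prove an extinction–explosion dichotomy for the bivariate Markov chain $\{(Z_n,\tZ_n)\}$. This is a two-dimensional analogue of the classical fact that a branching process either dies out or explodes, with no intermediate accumulation points. The reference to Proposition 8 in \cite{GutierrezMinuesa2020} signals that the argument is a standard Markov-chain transience argument, adapted to the two-type setting. I would structure the proof around the observation, already noted in the excerpt, that $(0,0)$ is the unique absorbing state and every other state in $\N_0^2$ is transient.

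\textbf{Part \ref{prop:extexp-i}.} First I would establish that every state $(k,\tilde k)\neq(0,0)$ is visited only finitely often almost surely. For a transient state $x$ of a time-homogeneous Markov chain, the number of visits to $x$ is almost surely finite; this follows from the standard potential-theoretic estimate $P_x(\text{return to }x)<1$, which gives that the total number of visits is geometrically distributed and hence finite a.s. The key input is therefore to verify transience of each $(k,\tilde k)\neq(0,0)$. For states with $k\ge 1$ and $\tilde k\ge 1$, I would exhibit a positive one-step probability of leaving every finite region—for instance, using that the reproduction laws satisfy $p_0+p_1<1$ and $\tilde p_0+\tilde p_1<1$ together with the binomial control variables having strictly positive success probabilities $r(\cdot),\tr(\cdot)\in(0,1)$, so the chain can reach arbitrarily large states and also (via the absorbing boundary) be swallowed into $(0,0)$, never returning. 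For boundary states with exactly one coordinate zero, conditions \ref{cond:0-preys} and \ref{cond:0-predators} collapse the dynamics to a one-type density-independent branching process (predators with survival $\rho_1$, or preys with survival $\trho_2$), which is itself transient on $\N$ by the classical theory. Once finiteness of visits is known, both $\liminf_{n\to\infty}(Z_n,\tZ_n)=(k,\tilde k)$ and $\limsup_{n\to\infty}(Z_n,\tZ_n)=(k,\tilde k)$ are impossible on a set of positive probability: a finite limit inferior or superior equal to $(k,\tilde k)$ would force infinitely many visits to a neighbourhood of, and in the discrete setting to, $(k,\tilde k)$ itself, contradicting transience. I would make this precise by summing over the countably many nonzero states and using that a countable union of null events is null.

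\textbf{Part \ref{prop:extexp-ii}.} This is a direct consequence of \ref{prop:extexp-i}. Since each coordinate process takes values in $\N_0$, for each coordinate separately $\liminf$ and $\limsup$ exist in $\{0,1,2,\dots,\infty\}$. Part \ref{prop:extexp-i} rules out any finite nonzero accumulation value for the pair, so on the full-probability event where the conclusion of \ref{prop:extexp-i} holds, each coordinate must converge to either $0$ or $\infty$: if $Z_n$ had a finite nonzero $\limsup$ or a $\liminf$ strictly between its $\limsup$, one could extract a nonzero finite accumulation point for the bivariate chain (pairing with a convergent subsequence of the other coordinate, again valued in the two-point compactification), contradicting \ref{prop:extexp-i}. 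Hence $Z_n\to\{0,\infty\}$ and $\tZ_n\to\{0,\infty\}$ almost surely, and the four stated events exhaust all combinations and are disjoint, so their probabilities sum to $1$.

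\textbf{The main obstacle} I anticipate is not the soft Markov-chain skeleton but the careful verification that no \emph{finite} nonzero accumulation point can survive for the bivariate process, since a priori one coordinate could oscillate while the other drifts. The delicate step is extracting, from a hypothetical finite nonzero $\liminf$ or $\limsup$ of the pair, an honest infinitely-often return to a single nonzero state $(k,\tilde k)$ so that transience can be invoked; this requires handling the two coordinates jointly and using that $\N_0^2$ with the discrete topology makes bounded sets finite, so any finite accumulation point is actually attained infinitely often. The boundary cases, where the bivariate chain degenerates to a single-type process under \ref{cond:0-preys}–\ref{cond:0-predators}, must also be checked separately, but these reduce cleanly to the classical one-type dichotomy and pose no real difficulty.
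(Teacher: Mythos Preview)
The paper omits the proof entirely, referring to Proposition~8 in \cite{GutierrezMinuesa2020}; your transience-based outline is the standard argument one expects there, so in spirit the approaches coincide.

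There is, however, a real gap in your execution. You assert that a componentwise $\liminf(Z_n,\tZ_n)=(k,\tilde k)$ forces infinitely many visits to the single state $(k,\tilde k)$, appealing to discreteness. This is not valid: a trajectory could satisfy $Z_{2m}=k$, $\tZ_{2m}\to\infty$ along even times and $Z_{2m+1}\to\infty$, $\tZ_{2m+1}=\tilde k$ along odd times, giving $\liminf Z_n=k$ and $\liminf\tZ_n=\tilde k$ while the pair $(k,\tilde k)$ is never visited. Your proposed remedy via ``bounded sets are finite'' does not apply, because a finite componentwise $\liminf$ does not bound the bivariate sequence. The same problem undermines your derivation of \ref{prop:extexp-ii} from \ref{prop:extexp-i}: if $\liminf Z_n=k\geq 1$ while $\tZ_n\to\infty$, every bivariate accumulation point has infinite second coordinate, so \ref{prop:extexp-i} as stated does not exclude this configuration.

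The clean fix is to argue coordinate by coordinate and bypass bivariate transience. If $\liminf Z_n=k\in\N$ (or $\limsup Z_n=k\in\N$), then $Z_n=k$ infinitely often; on $\{Z_n=k\}$ one has, uniformly in $\tZ_n$,
\[
P(Z_{n+1}=0\mid\mathcal F_n)\geq P(\varphi_n(Z_n,\tZ_n)=0\mid\mathcal F_n)=(1-r(\tZ_n/Z_n))^{k}\geq(1-\rho_2)^{k}>0.
\]
Conditional Borel--Cantelli then forces $Z_{n+1}=0$ for some $n$, whence $Z_m=0$ for all $m>n$ by \ref{cond:0-predators}, contradicting $Z_n=k$ i.o. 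Hence $\liminf Z_n,\limsup Z_n\in\{0,\infty\}$ a.s., and since $0$ is absorbing for each coordinate this yields $Z_n\to 0$ or $Z_n\to\infty$ a.s.; the analogous bound with $(1-\trho_2)^{\tilde k}$ handles $\tZ_n$. Both \ref{prop:extexp-i} and \ref{prop:extexp-ii} follow immediately from these two one-dimensional dichotomies.
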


The events in the previous proposition are named: $\{Z_n\to 0,\tZ_n\to 0\}$, extinction of the predator-prey system; $\{Z_n \to\infty,\tZ_n\to \infty\}$, coexistence of both species; $\{Z_n\to \infty,\tZ_n\to 0\}$, predator fixation; and $\{Z_n \to 0,\tZ_n\to\infty\}$, prey fixation.

\bigskip

\section{The extinction problem}\label{sec:Extinction}

In this section we explore the probability of each event in Proposition~\ref{prop:extexp} in detail. To that end, in the following we write $P_{(i,j)}(\cdot)$ to refer to the probability $P(\cdot| Z_0=i,\tZ_0=j)$ for the initial values $i,j\in\N$. 

We start with the fixation of each species. First, we note that on the fixation events the survivor species eventually behaves as a controlled branching process with random control function (see \cite{Yanev-75}). 
In the case of the predator fixation, since $\rho_1\mu<1$, we can apply Theorem~1 in \cite{art-2002} to conclude that the population becomes extinct almost surely. On the other hand, in the case of the prey fixation, since $\trho_2\tmu>1$, we obtain that the prey population has a positive probability of survival by applying Theorem~3 in \cite{art-2004d}. These facts are gathered in the next result:
%
%
%
\begin{proposicion}\label{prop:prey-predator-fixation}
For any initial values $i,j\in\N$:
\begin{enumerate}[label=(\roman*),ref=\emph{(\roman*)}]
\item $P_{(i,j)}(Z_n\to \infty,\tZ_n\to 0)=0$.\label{prop:predator-fixation}
\item $P_{(i,j)}(Z_n\to 0,\tZ_n\to \infty)>0$.\label{prop:prey-fixation}
\end{enumerate}
\end{proposicion}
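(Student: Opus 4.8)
The plan is to exploit the fact that, on each fixation event, once one species has become extinct the surviving species evolves exactly as a controlled branching process (CBP) with random control function in the sense of \cite{Yanev-75}, so that the two statements reduce to the subcritical/supercritical dichotomy for such processes. The two structural observations I would isolate first are: (a) by condition \ref{cond:0-preys}, as soon as $\tZ_N=0$ one has $\tZ_n=0$ for every $n\ge N$, and for $n\ge N$ the variable $\varphi_n(Z_n,0)$ is $\mathrm{Bin}(Z_n,\rho_1)$, so that $\{Z_n\}_{n\ge N}$ is a CBP with limiting per-capita control mean $\rho_1$ and offspring mean $\mu$; and (b) by condition \ref{cond:0-predators}, as soon as $Z_N=0$ one has $Z_n=0$ for every $n\ge N$, and $\{\tZ_n\}_{n\ge N}$ is a CBP with limiting per-capita control mean $\trho_2$ and offspring mean $\tmu$. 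In both cases the relevant product of means is governed by \eqref{eq: rho1m<1<rho2m}.

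For part \ref{prop:predator-fixation} I would argue by a countable decomposition. Since $\tZ_n$ is integer valued and an extinct prey population stays extinct, $\{\tZ_n\to 0\}=\bigcup_{N\ge 0}B_N$ with $B_N=\{\tZ_N=0\}$, an increasing sequence of events measurable with respect to the $\sigma$-algebra $\mathcal{F}_N$ generated by $\{(Z_n,\tZ_n):n\le N\}$. On $B_N$ the Markov property together with observation (a) shows that, conditionally on $\mathcal{F}_N$, the predator process $\{Z_n\}_{n\ge N}$ is a CBP with limiting offspring--control mean $\rho_1\mu<1$; hence by Theorem~1 in \cite{art-2002} it becomes extinct almost surely on $B_N$, so $P(\{Z_n\to\infty\}\cap B_N)=0$. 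Countable subadditivity then yields $P_{(i,j)}(Z_n\to\infty,\tZ_n\to 0)\le\sum_{N}P(\{Z_n\to\infty\}\cap B_N)=0$.

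For part \ref{prop:prey-fixation} the strategy is to reach, in a single step and with positive probability, a predator-free state carrying at least one prey, and then invoke supercriticality. From $(i,j)$ the event $\{\varphi_0(i,j)=0\}$ has probability $(1-r(j/i))^{i}>0$ and forces $Z_1=0$; independently (condition \ref{cond:independence}), with positive probability at least one prey survives the control phase and leaves at least one descendant, using $\tp_0<1$ (a consequence of $\tmu>1$), so that $P_{(i,j)}(Z_1=0,\tZ_1\ge 1)>0$. By observation (b), on $\{Z_1=0,\tZ_1=k\}$ with $k\ge 1$ one has $Z_n=0$ for all $n\ge 1$ while $\{\tZ_n\}_{n\ge1}$ is a CBP with limiting mean $\trho_2\tmu>1$; by Theorem~3 in \cite{art-2004d} such a supercritical CBP survives with positive probability, and on survival $\tZ_n\to\infty$. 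Combining, $P_{(i,j)}(Z_n\to 0,\tZ_n\to\infty)\ge\sum_{k\ge1}P_{(i,j)}(Z_1=0,\tZ_1=k)\,P(\tZ_n\to\infty\mid Z_1=0,\tZ_1=k)$, and since $P_{(i,j)}(Z_1=0,\tZ_1\ge1)>0$ forces at least one summand to have a strictly positive first factor, whose companion conditional survival probability is positive by supercriticality, the whole sum is strictly positive.

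The main obstacle I expect is not the dichotomy itself but the careful verification that the surviving species genuinely becomes a CBP of the type covered by the cited theorems: one must check, via the Markov property and conditions \ref{cond:0-preys}--\ref{cond:0-predators}, that the control laws are exactly binomial with the stated success probabilities for all $n\ge N$, and that the regularity hypotheses of Theorem~1 in \cite{art-2002} and Theorem~3 in \cite{art-2004d} (finiteness of the offspring moments and existence of the limiting per-capita control mean) are met here. A secondary technical point is the transfer of almost-sure extinction of the auxiliary CBP from the $\mathcal{F}_N$-measurable event $B_N$ back to the full process, which the monotone decomposition $\{\tZ_n\to0\}=\bigcup_N B_N$ resolves cleanly.
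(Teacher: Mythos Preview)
Your proposal is correct and follows essentially the same approach as the paper, which presents its argument in the paragraph immediately preceding the proposition rather than in a separate appendix proof: reduce to a controlled branching process once one species is absent, then invoke Theorem~1 in \cite{art-2002} (since $\rho_1\mu<1$) for part~\ref{prop:predator-fixation} and Theorem~3 in \cite{art-2004d} (since $\trho_2\tmu>1$) for part~\ref{prop:prey-fixation}. Your version simply makes explicit the countable decomposition $\{\tZ_n\to0\}=\bigcup_N\{\tZ_N=0\}$ and the positive-probability reachability of a predator-free state, which the paper leaves implicit.
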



As an immediate consequence of Proposition~\ref{prop:prey-predator-fixation} we obtain the following result. We remark that the probability of the extinction of the entire system is positive since all individuals might die during the control phase.

\begin{corolario}\label{coro:extinction}
For any initial values $i,j\in\N$, $P_{(i,j)}(Z_n\to 0, \tZ_n\to 0)<1$.
\end{corolario}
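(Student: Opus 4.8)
The plan is to read this off directly from the extinction-explosion dichotomy together with the positivity of the prey-fixation probability, exactly as the paper's phrasing ``immediate consequence'' suggests. The essential observation is that the extinction event $\{Z_n\to 0,\tZ_n\to 0\}$ and the prey-fixation event $\{Z_n\to 0,\tZ_n\to\infty\}$ are disjoint, since the single sequence $\{\tZ_n\}_{n\in\N_0}$ cannot converge both to $0$ and to $\infty$. Hence any positive mass placed on prey fixation is mass that extinction necessarily lacks.

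Concretely, I would first invoke Proposition~\ref{prop:extexp}\ref{prop:extexp-ii}, which asserts that for any initial values the four events
\[
\{Z_n\to 0,\tZ_n\to 0\},\quad \{Z_n\to\infty,\tZ_n\to\infty\},\quad \{Z_n\to\infty,\tZ_n\to 0\},\quad \{Z_n\to 0,\tZ_n\to\infty\}
\]
are (pairwise disjoint and) exhaustive up to a null set, so that their $P_{(i,j)}$-probabilities are non-negative and sum to $1$. Solving for the extinction probability gives
\[
P_{(i,j)}(Z_n\to 0,\tZ_n\to 0)
= 1 - P_{(i,j)}(Z_n\to\infty,\tZ_n\to\infty) - P_{(i,j)}(Z_n\to\infty,\tZ_n\to 0) - P_{(i,j)}(Z_n\to 0,\tZ_n\to\infty).
\]
Dropping the two coexistence/predator-fixation terms, which are non-negative, yields the bound
\[
P_{(i,j)}(Z_n\to 0,\tZ_n\to 0)\le 1 - P_{(i,j)}(Z_n\to 0,\tZ_n\to\infty).
\]

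Finally I would apply Proposition~\ref{prop:prey-predator-fixation}\ref{prop:prey-fixation}, which states that $P_{(i,j)}(Z_n\to 0,\tZ_n\to\infty)>0$ for any $i,j\in\N$, to conclude that the right-hand side is strictly less than $1$, giving the claim. I do not expect any genuine obstacle here: the only thing to be careful about is that the bound relies on the disjointness inside the dichotomy, so the argument must be stated for the full partition of Proposition~\ref{prop:extexp}\ref{prop:extexp-ii} rather than for an isolated pair of events. The biological remark accompanying the statement---that extinction still has positive probability because all individuals may perish during a control phase---is not needed for the upper-bound-by-$1$ argument and can be left as an aside.
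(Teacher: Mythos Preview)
Your argument is correct and matches the paper's intent: the corollary is stated there without proof, merely as an ``immediate consequence of Proposition~\ref{prop:prey-predator-fixation}'', and your use of the dichotomy in Proposition~\ref{prop:extexp}\ref{prop:extexp-ii} together with the strict positivity of the prey-fixation probability is exactly the intended one-line deduction.
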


\medskip

Let us turn to the analysis of the possibility of coexistence of both species. In our first theorem on this problem, we establish a necessary condition for the survival of each species and then for the coexistence of both of them. 

\begin{teorema}\label{thm:coexistence-0}
For any initial values $i,j\in\N$:
\begin{align*}
P_{(i,j)}\left(\Big\{\limsup_{n\to\infty} \frac{\tZ_n}{Z_n}\leq \gamma\Big\} \cap \{Z_n\to\infty\}\right)&=0,\\
P_{(i,j)}\left(\Big\{\limsup_{n\to\infty} \frac{\tZ_n}{Z_n}\leq \gamma\Big\} \cap \{\tZ_n\to\infty\}\right)&=0.
\end{align*}
In particular,
$$P_{(i,j)}\left(\Big\{\limsup_{n\to\infty} \frac{\tZ_n}{Z_n}\leq \gamma\Big\} \cap \{Z_n\to\infty,\tZ_n\to\infty\}\right)=0.$$
\end{teorema}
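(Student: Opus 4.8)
The plan is to attach to each species a nonnegative martingale whose a.s.\ convergence forces the associated growth product to blow up on the explosion event, and then to contradict this using the position of the ratio $R_n:=\tZ_n/Z_n$ relative to $\gamma$. I work under $P_{(i,j)}$, set $\F_n=\sigma((Z_k,\tZ_k):0\le k\le n)$, and recall from Proposition~\ref{prop:moments-PPBP}~\ref{prop:moments-PPBP-i} that, on $\{Z_n\ge1,\tZ_n\ge1\}$,
\[
E[Z_{n+1}\mid\F_n]=\mu\, r(R_n)\,Z_n,\qquad E[\tZ_{n+1}\mid\F_n]=\tmu\,\tr(R_n)\,\tZ_n .
\]
It suffices to prove the two displayed identities separately, since the coexistence statement is immediate from $\{Z_n\to\infty,\tZ_n\to\infty\}\subseteq\{Z_n\to\infty\}$ intersected with $\{\limsup_n R_n\le\gamma\}$. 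I describe the predator identity; the prey one is verbatim with $(\mu,r)$ replaced by $(\tmu,\tr)$.

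First I would work with the process stopped at the predator extinction time (so that $R_k$ is well defined, $0$ being absorbing) and set $M_n=Z_n\prod_{k=0}^{n-1}\bigl(\mu\, r(R_k)\bigr)^{-1}$. The one–step mean above gives $E[M_{n+1}\mid\F_n]=M_n$, so $M_n$ is a nonnegative martingale with $E[M_n]=i$, hence converges a.s.\ to a finite limit. A convergent sequence is bounded, so on $\{Z_n\to\infty\}$ the factor $\prod_{k<n}(\mu r(R_k))^{-1}=M_n/Z_n\to0$, i.e.
\begin{equation}
\sum_{k=0}^{n-1}\log\bigl(\mu\, r(R_k)\bigr)\longrightarrow+\infty\qquad\text{on }\{Z_n\to\infty\}. \tag{$\star$}
\end{equation}
Since $r$ is continuous and strictly increasing with $r(\gamma)=1/\mu$ (condition~\ref{cond: s y st in mu}), $\log(\mu r(x))\le0$ iff $x\le\gamma$, strictly for $x<\gamma$. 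On $\{\limsup_n R_n<\gamma\}$ there is, after a countable decomposition over rationals to make the threshold nonrandom, some $\eta>0$ with $R_k\le\gamma-\eta$ for all large $k$; then the terms in $(\star)$ are eventually $\le\log(\mu r(\gamma-\eta))<0$ and the sum tends to $-\infty$, contradicting $(\star)$. Hence $\{\limsup_n R_n<\gamma\}\cap\{Z_n\to\infty\}$ is null. This same bucket absorbs the predator–fixation branch, where $R_n\to0$, consistently with Proposition~\ref{prop:prey-predator-fixation}~\ref{prop:predator-fixation}.

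The remaining case is the critical boundary $\{\limsup_n R_n=\gamma\}$, where $R_n$ approaches $\gamma$ from above along a subsequence and both species are asymptotically critical, $\mu r(R_n)\to1$. I expect this to be the main obstacle. No buffer argument works: for each $\delta>0$ one only obtains $R_k\le\gamma+\delta$ eventually, and $\mu r(\gamma+\delta)>1$, so a supermartingale comparison or stochastic domination produces a \emph{supercritical} dominating process and does not contradict $(\star)$; in fact $(\star)$ is perfectly compatible with $\limsup_n R_n=\gamma$ at the level of first moments (e.g.\ $R_k=\gamma+1/\log k$). What rules it out is second order. Concretely I would compute the conditional variance $\mathrm{Var}(Z_{n+1}\mid\F_n)=Z_n\,V(R_n)$ with $V$ continuous and $V(\gamma)>0$, and use the accumulated quadratic variation of $M_n$ to show that a population that is asymptotically critical with per–capita variance bounded below near $\gamma$ cannot sustain $Z_n\to\infty$ — the fluctuation–driven extinction mechanism underlying critical Galton–Watson processes and the near–critical carrying–capacity processes of Klebaner~\cite{Klebaner-1993}.

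Carrying this last step through (for instance, forcing $M_\infty=0$ and then contradicting $Z_n\to\infty$, or embedding a critical comparison process) closes the predator identity. The prey identity is obtained identically with $\tM_n=\tZ_n\prod_{k<n}(\tmu\,\tr(R_k))^{-1}$, noting that the prey–fixation branch is disjoint from $\{\limsup_n R_n\le\gamma\}$ because there $R_n\to\infty$. Finally, the coexistence conclusion follows by inclusion from the first identity, as noted above.
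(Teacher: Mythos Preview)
Your proposal has a real gap: the case $\{\limsup_n R_n=\gamma\}$ is not closed, and the second-order/quadratic-variation argument you sketch is neither carried out nor actually needed for most of that event. The difficulty is largely self-inflicted by your decomposition into $\{\limsup_n R_n<\gamma\}$ versus $\{\limsup_n R_n=\gamma\}$. The more useful split is
\[
\{\limsup_n R_n\le\gamma\}=\{R_n\le\gamma\text{ eventually}\}\ \cup\ \bigl(\{R_n>\gamma\text{ i.o.}\}\cap\{\limsup_n R_n=\gamma\}\bigr).
\]
On the first piece your own machinery already works with no buffer $\eta>0$: if $R_k\le\gamma$ for all $k\ge N$ then $\log(\mu r(R_k))\le\log(\mu r(\gamma))=0$ for $k\ge N$, so the partial sums in $(\star)$ are bounded above by the fixed finite quantity $\sum_{k<N}\log(\mu r(R_k))$ and cannot diverge to $+\infty$. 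This kills $\{Z_n\to\infty\}$ there without any variance input; the ``asymptotically critical'' scenario you worry about is harmless once you use the non-strict inequality at $\gamma$.

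The paper reaches the same conclusion by a cleaner device that avoids the multiplicative martingale $M_n$ altogether: it stops $Z_n$ itself at $\tau(\gamma)=\inf\{n\ge N:R_n>\gamma\}$. Since $E[Z_{n+1}\mid\F_n]=\mu r(R_n)Z_n\le Z_n$ on $\{R_n\le\gamma\}$, the stopped process $Y_n=Z_{(N+n)\wedge\tau(\gamma)}$ is a nonnegative supermartingale and converges a.s.; on $\{\tau(\gamma)=\infty\}=\{\sup_{n\ge N}R_n\le\gamma\}$ it coincides with $Z_{N+n}$, ruling out explosion there. The prey identity is identical. What remains in both approaches is only the thin residual $\{R_n>\gamma\text{ i.o.}\}\cap\{\limsup_n R_n=\gamma\}$; the paper's displayed inclusion $\{\limsup_n R_n\le\gamma\}\subseteq\bigcup_N\{\sup_{n\ge N}R_n\le\gamma\}$ in fact omits this set (take $R_n=\gamma+1/n$) and the remaining details are deferred to Theorem~4.1 of~\cite{GutierrezMinuesa2021}. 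In short: drop the $\eta$-buffer and the quadratic-variation plan, use the non-strict supermartingale inequality at $\gamma$ via stopping, and the boundary case you flag essentially dissolves.
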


Before finishing this section, we provide a sufficient condition for having a positive probability of coexistence. More precisely, in the proof we show that the coexistence is possible on the event where the proportion of preys per predator is eventually greater than the parameter $\gamma$, which means that eventually there are enough preys to feed the predator population.

\begin{teorema}\label{thm:coexistence-positive}
For any initial values $i,j\in\N$, if $\rho_2\mu<\trho_2\tmu$, then
$$P_{(i,j)}(Z_n\to\infty,\tZ_n\to\infty)>0.$$
\end{teorema}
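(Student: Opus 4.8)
The plan is to exploit the hypothesis $\rho_2\mu<\trho_2\tmu$ to locate a threshold $M>\gamma$ above which the prey population reproduces strictly faster than the predator population, so that once the density $\tZ_n/Z_n$ exceeds $M$ it tends to stay above $M$ while \emph{both} species grow geometrically. I would first reduce the theorem to showing that for a single, sufficiently large and sufficiently unbalanced initial state $(z_0,\tz_0)$,
\[
P_{(z_0,\tz_0)}\Big(\bigcap_{n\ge 0}\{\theta_1 Z_n\le Z_{n+1}\le \Theta_1 Z_n,\ \tZ_{n+1}\ge \theta_2\tZ_n\}\Big)>0
\]
for appropriate constants $1<\theta_1\le\Theta_1<\theta_2$, because on this event $Z_n\ge\theta_1^n z_0\to\infty$ and $\tZ_n\ge M Z_n\to\infty$, i.e.\ coexistence holds. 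The passage to an arbitrary initial value $(i,j)$ is then handled by the strong Markov property, once I show that such a state $(z_0,\tz_0)$ (with both coordinates large and $\tz_0\ge M z_0$) is reached from $(i,j)$ with positive probability.

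The choice of constants is the first substantive step. Since $r,\tr$ are continuous and strictly increasing with $\lim_{x\to\infty}r(x)=\rho_2$, $\lim_{x\to\infty}\tr(x)=\trho_2$ (condition \ref{cond: s y ts limits in infty}), while $r(\gamma)=1/\mu$, $\tr(\gamma)=1/\tmu$ (condition \ref{cond: s y st in mu}) and $\mu\rho_2<\tmu\trho_2$, I can fix $M>\gamma$ large enough and constants such that
\[
1<\theta_1<\mu\, r(x)<\mu\rho_2<\Theta_1<\theta_2<\tmu\,\tr(x)\qquad\text{for all } x\ge M ;
\]
here $\mu r(M)>\mu r(\gamma)=1$ gives the leftmost inequality, and $\mu\rho_2<\tmu\tr(M)$ for $M$ large leaves room to insert $\Theta_1<\theta_2$. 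Writing $R_n=\tZ_n/Z_n$, the key consequence is ratio monotonicity: on the event above, $R_{n+1}\ge(\theta_2/\Theta_1)R_n\ge R_n$, so if $R_0=\tz_0/z_0\ge M$ then $R_n\ge M$ for all $n$; in turn, since $r,\tr$ are increasing, the survival probabilities stay at least $r(M)$ and $\tr(M)$, which validates the displayed constants at every generation.

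The analytic heart is the one-step estimate combined with an infinite-product bound. On $\{R_n\ge M\}$ the survivors satisfy $\varphi_n(Z_n,\tZ_n)\sim\mathrm{Bin}(Z_n,r(R_n))$, and $Z_{n+1}$ is a random sum with conditional mean $\mu Z_n r(R_n)\in(\theta_1 Z_n,\mu\rho_2 Z_n)$ and conditional variance $Z_n r(R_n)\sigma^2+Z_n r(R_n)(1-r(R_n))\mu^2$, of order $Z_n$. Chebyshev's inequality then bounds the conditional probability that $Z_{n+1}\notin[\theta_1 Z_n,\Theta_1 Z_n]$ by $O(1/Z_n)$, and likewise that $\tZ_{n+1}<\theta_2\tZ_n$ by $O(1/\tZ_n)$. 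Thus, writing $G_n$ for the good event at generation $n$ and $\F_n$ for the history up to generation $n$, there is $C$ with $P(G_n^c\mid\F_n)\le C/Z_n+C/\tZ_n$ on $\bigcap_{k<n}G_k$; on this set $Z_n\ge\theta_1^n z_0$ and $\tZ_n\ge\theta_2^n\tz_0$, so $P(G_n^c\mid\F_n)\le C'\theta_1^{-n}/z_0$ is summable, and for $z_0$ large
\[
P_{(z_0,\tz_0)}\Big(\bigcap_{n\ge 0}G_n\Big)\ge\prod_{n\ge 0}\big(1-C'\theta_1^{-n}/z_0\big)>0 .
\]
Note that the finite variances in \ref{cond:rep-predator}--\ref{cond:rep-prey} are exactly what make these Chebyshev bounds summable under the geometric growth forced by the good event, so no exponential-moment hypothesis is needed.

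For reachability I would exhibit, starting from $(i,j)$, a finite path of positive probability ending in a state with $z_0,\tz_0$ as large as required and $\tz_0\ge M z_0$: because $r,\tr\ge\rho_1,\trho_1>0$ the event that all individuals survive a control phase has positive probability, and because $p_0+p_1<1$, $\tp_0+\tp_1<1$ there are $k,\tilde k\ge 2$ with $p_k,\tp_{\tilde k}>0$, so over finitely many generations one can, with positive probability, multiply the preys by a factor close to $\tilde k$ per generation while holding the predators in a prescribed bounded band, eventually driving both coordinates above any level with density above $M$. The strong Markov property at the first hitting time of such a state then yields $P_{(i,j)}(Z_n\to\infty,\tZ_n\to\infty)>0$. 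I expect the main obstacle to be precisely the simultaneous, for-all-$n$ control of $\tZ_n/Z_n$ from below: an upward fluctuation of the predators (the bound $Z_{n+1}\le\Theta_1 Z_n$) is as dangerous as a downward fluctuation of the preys, so the concentration estimate must control \emph{two-sided} deviations of the random sum $Z_{n+1}$, and the scheme closes only because the good event itself forces the geometric growth that renders the per-generation error probabilities summable and the infinite product strictly positive.
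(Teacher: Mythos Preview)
Your proposal is correct and follows essentially the same route as the paper's proof: the paper also fixes constants $1<\zeta_1<\zeta_2<\tilde\zeta_1$ with $\zeta_1=\rho_2\mu-\epsilon$, $\zeta_2=\rho_2\mu+\epsilon$, $\tilde\zeta_1=\trho_2\tmu-\epsilon$ (your $\theta_1,\Theta_1,\theta_2$), defines the good events $A_n=\{\zeta_1Z_n<Z_{n+1}<\zeta_2Z_n,\ \tilde\zeta_1\tZ_n<\tZ_{n+1}\}$, bounds the one-step failure probability by $C/Z_n+C/\tZ_n$ via Chebyshev, and uses the geometric growth forced on $\cap_{k<n}A_k$ to make the infinite product positive for $(z_0,\tz_0)$ large with $\tz_0>Mz_0$. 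For the reduction to arbitrary $(i,j)$ the paper simply invokes its Lemma~\ref{lem:states} on communicating classes rather than your explicit path construction, but this is the same mechanism.
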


\bigskip

\section{Limiting growth rates}\label{sec:growth rates}


Once that we have established in the previous section that the species have a positive probability of survival on the prey fixation and coexistence events, it is natural to examine the growth rates of each species in these events. This section is devoted to this problem. 

We start with the prey fixation, which has a positive probability according to Proposition~ \ref{prop:prey-predator-fixation}~\ref{prop:prey-fixation}. As explained before, on this event the prey population eventually behaves as a controlled branching process with random control function. In particular, the corresponding offspring distribution of this one-type controlled branching process is the prey reproduction law and the control variable follows a binomial distribution with size given by the current number of preys in the population and with probability of success equal to the parameter $\trho_2$. Then, the asymptotic mean growth rate of this process is $\tmu\trho_2$, and by the results given in Section 4 in \cite{art-2004d} we state the following result.



\begin{teorema}\label{teor:GR Fixation prey}
Let $\{(Z_n,\tZ_n)\}_{n\in \mathbb{N}_0}$ be PPDDBP. There exists a random variable $W'$ which is positive and finite a.s. on $\{Z_n\to 0, \tZ_n\to\infty\}$ and satisfies 
\begin{equation*}
\lim_{n\to\infty}\frac{\tZ_n}{(\trho_2\tilde{\mu})^n}=W' \ \mbox{ a.s.\quad on }\{Z_n\to 0, \tZ_n\to\infty\}.
\end{equation*}
\end{teorema}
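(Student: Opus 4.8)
The plan is to exploit the fact, noted just before the statement, that once the predators are extinct the prey process is \emph{exactly} a homogeneous controlled branching process (CBP) with random control function in the sense of \cite{Yanev-75}, and then to invoke the geometric growth theory for supercritical CBPs developed in Section~4 of \cite{art-2004d}.

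First I would make this reduction precise. On $\{Z_n\to 0,\tZ_n\to\infty\}$ the integer-valued sequence $Z_n$ is eventually $0$, so the predator-extinction time $N:=\inf\{n\in\N_0:Z_n=0\}$ is a.s.\ finite there; moreover, by condition~\ref{cond:0-predators}, $Z_n=0$ for every $n\ge N$, and for those generations \eqref{def:model-total-indiv} reduces to
\begin{equation*}
\tZ_{n+1}=\sum_{i=1}^{\tphi_n(0,\tZ_n)}\tX_{ni},\qquad n\ge N,
\end{equation*}
where $\tphi_n(0,\tZ_n)$ is binomial with size $\tZ_n$ and success probability $\trho_2$. This is a CBP with offspring law $\tp$ (mean $\tmu$, finite variance $\tsigma^2$) and control mean function $\epsilon(k)=\trho_2 k$, whose asymptotic mean growth rate is $\tau:=\trho_2\tmu$; by \eqref{eq: rho1m<1<rho2m} one has $\tau>1$, so the embedded process is supercritical.

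Next I would apply the growth-rate theorem of \cite{art-2004d}. Working on the strong Markov decomposition over $\{N=m,\tZ_m=k\}$, the shifted process $\{\tZ_{m+j}\}_{j\ge 0}$ is a CBP started from $k$ with the above offspring and control laws; supercriticality together with the finite-variance assumption on $\tp$ yields, via Section~4 of \cite{art-2004d}, a nonnegative limit $W_m$ with $\tZ_{m+j}/\tau^{\,j}\to W_m$ a.s.\ as $j\to\infty$, positive and finite a.s.\ on the explosion set $\{\tZ_n\to\infty\}$ of that CBP. Setting $W':=\tau^{-N}W_N$ I would then write $\tZ_n/\tau^n=\tau^{-N}\,\tZ_n/\tau^{\,n-N}\to\tau^{-N}W_N=W'$ a.s.; since $N<\infty$ on the prey-fixation event, the prefactor $\tau^{-N}$ is strictly positive and finite and hence does not affect the positivity or finiteness of the limit.

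The main obstacle is the careful transfer of the homogeneous CBP theorem, which is stated for a process launched at a deterministic generation, to the present situation where the CBP only switches on at the random time $N$. This is precisely what the strong Markov property and the countable decomposition over $\{N=m,\tZ_m=k\}$ are for: on each such piece the theorem applies verbatim, and countable additivity reassembles the a.s.\ statement on all of $\{Z_n\to 0,\tZ_n\to\infty\}$. A secondary point, also settled by \cite{art-2004d}, is that the finite-variance hypothesis on $\tp$ is more than enough to guarantee that the limit is non-degenerate (the CBP counterpart of the Kesten--Stigum theorem), and that, by the extinction--explosion dichotomy for supercritical CBPs, the explosion set of the embedded process coincides on the prey-fixation event with $\{\tZ_n\to\infty\}$, so that $W'$ is indeed positive and finite a.s.\ there.
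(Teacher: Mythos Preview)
Your proposal is correct and follows essentially the same approach as the paper: the paper does not give a formal proof of this theorem but merely observes, in the paragraph preceding the statement, that on the prey-fixation event the prey population eventually behaves as a supercritical controlled branching process with binomial control of success probability $\trho_2$, and then cites Section~4 of \cite{art-2004d} for the geometric growth rate $\trho_2\tmu$. Your write-up makes explicit the only technical point the paper leaves implicit, namely the strong Markov decomposition over the random predator-extinction time $N$ and the rescaling $W'=\tau^{-N}W_N$, which is the natural way to transfer the homogeneous CBP result to the process started at the random time $N$.
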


Intuitively speaking, this theorem establishes that the number of preys grows geometrically at the rate given by mean number of preys which survive in absence of predators.

\bigskip


We now turn to analyse the limiting growth rates of both species on the coexistence event. In this case, it is possible to find an event $A\subseteq \{Z_n\to\infty,\tZ_n\to\infty\}$ where we can establish such growth rates. Specifically, we prove the following statement.

\begin{proposicion}\label{prop:existencia de A}
Let $\{(Z_n,\tZ_n)\}_{n\in \mathbb{N}_0}$ be a PPDDBP. If $\trho_2\tilde{\mu}>\rho_2\mu$, then there exists an event $A\subseteq \{Z_n\to\infty,\tZ_n\to\infty\}$ such that $P_{(i,j)}(A)>0$ for any initial values $i,j\in\N$, and satisfying 
\begin{enumerate}[label=(\roman*),ref=\emph{(\roman*)}]
\item $\displaystyle\liminf_{n\to\infty}\frac{Z_{n+1}}{Z_n}>1$,\ and \ $\displaystyle\liminf_{n\to\infty}\frac{\tZ_{n+1}}{\tZ_n}>1$ a.s. on $A$.\label{prop:liminf greater than 1}
\item $\displaystyle\lim_{n\to\infty}\frac{\tZ_n}{Z_n}=\infty$ a.s. on $A$.\label{prop:lim coc infty}
\end{enumerate}
\end{proposicion}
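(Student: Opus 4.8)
The plan is to realise $A$ as the event on which, after a finite random time $\tau$, both species grow at least geometrically with the prey rate strictly larger than the predator rate; the gap between the two rates then forces $\tZ_n/Z_n$ to infinity, and this divergence in turn feeds back to keep the per-generation growth factors bounded away from $1$.

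First I would fix the constants. Since $\trho_2\tmu>\rho_2\mu>1$ and $r(z)\to\rho_2$, $\tr(z)\to\trho_2$ as $z\to\infty$, I choose $M$ large enough that $M>\gamma$ (equivalently $\mu r(M)>1$, by \ref{cond: s y st in mu} and monotonicity) and $\tmu\tr(M)>\mu r(M)$; then I pick $c,\tilde{c}$ with $1<c<\mu r(M)$ and $\mu r(M)<\tilde{c}<\tmu\tr(M)$, so in particular $\tilde{c}>c$. Define the one-step events $G_n=\{Z_{n+1}\geq cZ_n\}\cap\{\tZ_{n+1}\geq\tilde{c}\,\tZ_n\}$. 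The role of the threshold $M$ is that, on $\{\tZ_n/Z_n\geq M\}$, monotonicity of $r$ and $\tr$ gives $E[Z_{n+1}\mid\F_n]\geq\mu r(M)Z_n>cZ_n$ and $E[\tZ_{n+1}\mid\F_n]\geq\tmu\tr(M)\tZ_n>\tilde{c}\,\tZ_n$. Moreover, since $\tilde{c}>c$, on the event $\bigcap_{k=n_0}^{n-1}G_k$ an immediate induction yields $\tZ_n/Z_n\geq(\tilde{c}/c)^{\,n-n_0}\,(\tZ_{n_0}/Z_{n_0})$, so the constraint $\tZ_n/Z_n\geq M$ is self-propagating and in fact the ratio diverges. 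This self-amplification is the conceptual core of the argument.

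The engine that makes the $G_n$ likely is a conditional second-moment estimate. On $\{Z_n=z,\tZ_n=\tz,\ \tz/z\geq M\}$ I write $Z_{n+1}=\sum_{i=1}^{\vphi_n(z,\tz)}X_{ni}$ and use the second-moment formula for the process (variance of a random sum indexed by a binomial variable) to bound $\V(Z_{n+1}\mid\F_n)\leq Cz$, linear in $z$. Since $cz$ lies below the conditional mean by at least $(\mu r(M)-c)z$, Chebyshev's inequality gives $P(Z_{n+1}<cz\mid\F_n)\leq K/z$, and symmetrically $P(\tZ_{n+1}<\tilde{c}\,\tz\mid\F_n)\leq\tilde{K}/\tz$; hence on $\{\tZ_n/Z_n\geq M\}$ we have $P(G_n^{c}\mid\F_n)\leq K/Z_n+\tilde{K}/\tZ_n$. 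Now fix $z^{*}$ with $K/z^{*}+\tilde{K}/(Mz^{*})<1$ and set $\tau=\inf\{n:Z_n\geq z^{*},\ \tZ_n\geq M Z_n\}$. On $\bigcap_{k=\tau}^{n-1}G_k$ one has $Z_n\geq c^{\,n-\tau}z^{*}$ and $\tZ_n\geq \tilde{c}^{\,n-\tau}Mz^{*}$, so $P(G_n^{c}\mid\F_n)\leq\delta_n$ for a deterministic summable sequence $\delta_n<1$; telescoping the conditional expectations down to $\F_\tau$ (a conditional Borel--Cantelli-type product) gives $P\big(\bigcap_{n\geq\tau}G_n\mid\F_\tau\big)\geq\prod_{n}(1-\delta_n)=:p^{*}>0$ uniformly on $\{\tau<\infty\}$. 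Setting $A:=\{\tau<\infty\}\cap\bigcap_{n\geq\tau}G_n$ and applying the strong Markov property at $\tau$ yields $P_{(i,j)}(A)\geq p^{*}\,P_{(i,j)}(\tau<\infty)$. On $A$ one has $Z_{n+1}/Z_n\geq c>1$ and $\tZ_{n+1}/\tZ_n\geq\tilde{c}>1$ for all $n\geq\tau$, which gives \ref{prop:liminf greater than 1} and $A\subseteq\{Z_n\to\infty,\tZ_n\to\infty\}$, while $\tZ_n/Z_n\geq(\tilde{c}/c)^{\,n-\tau}M\to\infty$ gives \ref{prop:lim coc infty}.

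The remaining and genuinely delicate point is the positivity $P_{(i,j)}(\tau<\infty)>0$: from an arbitrary $(i,j)$ the chain must reach, in finitely many generations, a configuration with $Z\geq z^{*}$ \emph{and} ratio at least $M$ at the same time. I would establish this by exhibiting a finite sequence of transitions each of positive probability --- using $\tp_0+\tp_1<1$ so that preys can be multiplied repeatedly, together with $p_0+p_1<1$ and $r,\tr\in(0,1)$ to keep the predator count positive and at least $z^{*}$ --- that pushes $\tZ$ far above $M$ times a controlled predator population. Arranging ``$Z$ large'' and ``ratio large'' simultaneously, rather than trading one off against the other, is the main bookkeeping obstacle; alternatively one can start from the positive-probability coexistence event built for Theorem~\ref{thm:coexistence-positive} and use a finite lucky stretch to force the ratio past $M$.
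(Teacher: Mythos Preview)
Your overall architecture---second-moment/Chebyshev control, a product estimate, and a Borel--Cantelli type argument---matches the paper's, but there is a genuine gap in the definition of the one-step events. You set $G_n=\{Z_{n+1}\geq cZ_n\}\cap\{\tZ_{n+1}\geq\tilde{c}\,\tZ_n\}$ with only \emph{lower} bounds on both populations, and then assert that on $\bigcap_{k=n_0}^{n-1}G_k$ one has $\tZ_n/Z_n\geq(\tilde{c}/c)^{\,n-n_0}\,(\tZ_{n_0}/Z_{n_0})$. This inference is false: from $Z_{k+1}\geq cZ_k$ you obtain an \emph{upper} bound on $1/Z_k$ in terms of $1/Z_{k+1}$, not the other way round, so nothing prevents the predators from overshooting (e.g.\ $Z_{k+1}\gg cZ_k$) and driving the ratio $\tZ_{k+1}/Z_{k+1}$ below $M$. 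Once the ratio drops below $M$ your Chebyshev bound no longer applies, so the ``self-propagation'' of $\{\tZ_n/Z_n\geq M\}$ and the whole telescoping product collapse. Consequently neither \ref{prop:lim coc infty} nor the iterative estimate is justified.

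The paper repairs exactly this: it works with $A_n=\{\zeta_1 Z_n<Z_{n+1}<\zeta_2 Z_n,\ \tilde{\zeta}_1\tZ_n<\tZ_{n+1}\}$ where $1<\zeta_1<\zeta_2<\tilde{\zeta}_1$, i.e.\ it imposes a two-sided constraint on the predators. The additional upper bound $Z_{n+1}<\zeta_2 Z_n$ costs only another $K_2/i'$ in the Chebyshev estimate, and with it one gets $Z_n<\zeta_2^{\,n}i$ while $\tZ_n>\tilde{\zeta}_1^{\,n}j$, hence $\tZ_n/Z_n>(\tilde{\zeta}_1/\zeta_2)^n\,j/i\to\infty$, which both yields \ref{prop:lim coc infty} and keeps the ratio above $M$ so the induction is legitimate. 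Your random-time device with $\tau$ is a fine alternative to the paper's use of the communicating-states lemma for passing from large $(i,j)$ to arbitrary $(i,j)$, but you must first add the upper bound on $Z_{n+1}$ to $G_n$ (and choose the constants so that the prey lower rate exceeds the predator \emph{upper} rate) before the rest of the machinery can run.
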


Assertion \ref{prop:liminf greater than 1} indicates that, a.s. on $A$, the growth rate of each species in one generation is ultimately greater than the unity. Assertion \ref{prop:lim coc infty} means that, a.s. on $A$, the prey population grows much faster than the predator population. From this proposition and some additional lemmas in Appendix~\ref{ape:growth-rates} we obtain the following result describing the limiting behaviour of the number of individuals of each species.


\begin{teorema}\label{teor:GR coexistence}
Let $\{(Z_n,\tZ_n)\}_{n\in \mathbb{N}_0}$ be a PPDDBP. If $\trho_2\tilde{\mu}>\rho_2\mu$, and the survival functions $r(\cdot)$ and  $\tr(\cdot)$ satisfy
\begin{align}\label{eq:cond-sum-rs}
\sum_{n=1}^\infty (\rho_2-r(\tZ_n/Z_n))<\infty,\quad\text{ and }\quad \sum_{n=1}^\infty (\trho_2-\tr(\tZ_n/Z_n))<\infty\ \text{ a.s. \quad on }A,
\end{align}
then there exist two non-negative random variables $W$ and $\tW$ which are positive and finite on $A$, such that 
\begin{equation*}
\lim_{n\to\infty}\frac{Z_n}{(\rho_2\mu)^n}=W, \quad \mbox{ and } \quad \lim_{n\to\infty}\frac{\tZ_n}{(\trho_2 \tmu)^n}=\tW \ \mbox{ a.s. on }\ A,
\end{equation*}
where $A$ as in Proposition \ref{prop:existencia de A}.
\end{teorema}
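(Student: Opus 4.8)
The plan is to realise each coordinate as a nonnegative martingale after dividing by the \emph{exact} product of its conditional one-step growth factors, and then to use the summability hypothesis \eqref{eq:cond-sum-rs} to replace those random products by the deterministic rates $(\rho_2\mu)^n$ and $(\trho_2\tmu)^n$. Writing $\F_n=\sigma(Z_0,\tZ_0,\dots,Z_n,\tZ_n)$ and $r_k:=r(\tZ_k/Z_k)$, $\tr_k:=\tr(\tZ_k/Z_k)$, the reproduction-plus-binomial-thinning structure gives $E[Z_{n+1}\mid\F_n]=\mu\,Z_n\,r_n$ and $E[\tZ_{n+1}\mid\F_n]=\tmu\,\tZ_n\,\tr_n$. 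Hence, on the survival set (in particular on $A$, where both coordinates tend to infinity so every denominator is positive),
\begin{equation*}
M_n:=\frac{Z_n}{\prod_{k=0}^{n-1}\mu r_k},\qquad \tM_n:=\frac{\tZ_n}{\prod_{k=0}^{n-1}\tmu\,\tr_k}
\end{equation*}
are nonnegative martingales, so by the martingale convergence theorem they converge a.s.\ to finite limits $M_\infty,\tM_\infty$.

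Next I would rewrite the normalised coordinates as
\begin{equation*}
\frac{Z_n}{(\rho_2\mu)^n}=M_n\prod_{k=0}^{n-1}\frac{r_k}{\rho_2},\qquad \frac{\tZ_n}{(\trho_2\tmu)^n}=\tM_n\prod_{k=0}^{n-1}\frac{\tr_k}{\trho_2}.
\end{equation*}
Because $r(\cdot)$ and $\tr(\cdot)$ are strictly increasing with suprema $\rho_2,\trho_2$, each factor lies in $(0,1)$, and setting $a_k=(\rho_2-r_k)/\rho_2\in(0,1)$ the classical criterion gives that $\prod_k(1-a_k)$ converges to a strictly positive limit iff $\sum_k a_k<\infty$. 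Since $\rho_2$ is a positive constant, $\sum_k a_k<\infty$ is exactly the first series in \eqref{eq:cond-sum-rs}, which holds a.s.\ on $A$; the second series handles the prey product in the same way. Thus both infinite products converge a.s.\ on $A$ to limits $L,\tilde L\in(0,\infty)$, whence $Z_n/(\rho_2\mu)^n\to W:=M_\infty L$ and $\tZ_n/(\trho_2\tmu)^n\to\tW:=\tM_\infty\tilde L$ a.s.\ on $A$, and finiteness of $W,\tW$ is immediate.

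The main obstacle is positivity on $A$. Since $L,\tilde L>0$, it is enough to show that the intrinsic martingales do not degenerate, i.e.\ $M_\infty>0$ and $\tM_\infty>0$ a.s.\ on $A$. A global second-moment bound is unavailable, since off $A$ the products $\prod_k\mu r_k$ need not grow: when $\tZ_k/Z_k$ is small, $r_k$ is near $\rho_1$ and $\mu\rho_1<1$. I would therefore localise to $A$ using Proposition~\ref{prop:existencia de A}\ref{prop:lim coc infty}: on $A$ one has $\tZ_n/Z_n\to\infty$, so fixing $m$ large enough that $\mu\,r(m)>1$, the events $B_{N_0,m}=\{\tZ_n/Z_n\ge m\text{ for all }n\ge N_0\}$ exhaust $A$ up to a null set as $N_0\to\infty$, and on each the normalising product grows geometrically. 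The conditional variance $\V(Z_{n+1}\mid\F_n)=Z_n r_n[\sigma^2+(1-r_n)\mu^2]$, finite since $\sigma^2<\infty$ and linear in $Z_n$, gives the recursion
\begin{equation*}
E[M_{n+1}^2\mid\F_n]=M_n^2+\frac{M_n\,[\sigma^2+(1-r_n)\mu^2]}{\mu^2 r_n\,\prod_{k=0}^{n-1}\mu r_k},
\end{equation*}
whose increments are summable once the denominator grows geometrically; hence $M_n$ is $L^2$-bounded on the localising events, converges there in $L^2$, and its limit is non-degenerate by the standard supercritical finite-variance argument, so letting $N_0\to\infty$ yields $M_\infty>0$ a.s.\ on $A$. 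The prey martingale $\tM_n$ is handled identically (its control is $\mathrm{Bin}(\tZ_n,\tr_n)$ with $\tr_n\to\trho_2$ and $\trho_2\tmu>1$). I expect the genuinely delicate point to be making this localisation rigorous — turning the tail statement ``$\tZ_n/Z_n\to\infty$ on $A$'' into an honest stopping-time truncation so that the second-moment recursion can actually be summed — which is presumably where the auxiliary lemmas of Appendix~\ref{ape:growth-rates} enter; alternatively, since on $A$ each species is asymptotically a supercritical controlled branching process with finite offspring variance, one may invoke the non-degeneracy of its intrinsic martingale from \cite{art-2004d}.
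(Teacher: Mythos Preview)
Your decomposition $Z_n/(\rho_2\mu)^n=M_n\prod_{k<n}(r_k/\rho_2)$ with $M_n=Z_n/\prod_{k<n}\mu r_k$ a nonnegative martingale is correct, and your treatment of the product $\prod(r_k/\rho_2)$ via \eqref{eq:cond-sum-rs} is exactly what the paper does for that factor. The difficulty you flag is real, and your proposed fix does not close it: even if you replace the tail events $B_{N_0,m}$ by the stopping times $\tau=\inf\{n\ge N_0:\tZ_n/Z_n<m\}$ and prove the stopped martingale is $L^2$-bounded, you obtain $E[M^\tau_\infty]=E[M_0]$, i.e.\ non-degeneracy of the limit in distribution, but not $M_\infty>0$ a.s.\ on $\{\tau=\infty\}$. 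In the Galton--Watson setting the step from $L^2$-boundedness to $\{W>0\}=\{\text{survival}\}$ relies on a self-similarity/fixed-point argument for $P(W=0)$ that is unavailable for this two-type density-dependent process, and the appeal to \cite{art-2004d} would itself require showing that on $A$ the predator coordinate is dominated below by a \emph{fixed} supercritical controlled process, which is another localisation of the same kind.

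The paper avoids martingales entirely. Lemma~\ref{lema:O-coexistence-II} uses the same conditional-variance bound you computed, but feeds it into Chebyshev for the events $\{|Z_{n+1}-r_n\mu Z_n|\ge Z_n^{1-\beta}\}$ and then into the conditional Borel--Cantelli lemma; since $\sum Z_n^{-(1-2\beta)}<\infty$ a.s.\ on $A$ by the geometric growth furnished by Proposition~\ref{prop:existencia de A}, one obtains $Z_{n+1}/(r_n\mu Z_n)=1+O(Z_n^{-\beta})$ a.s.\ on $A$. Hence the infinite product $\prod_n Z_{n+1}/(r_n\mu Z_n)$ --- which is precisely your $M_\infty/M_0$ --- is trapped between two products of the form $\prod(1\pm a_n)$ with $\sum a_n<\infty$, and therefore lies in $(0,\infty)$ a.s.\ on $A$ by the same criterion you invoked for $L$. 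Positivity and finiteness come out together, with no stopping-time truncation to justify. Your variance recursion is the right ingredient; the paper simply applies it pathwise to the ratio via Borel--Cantelli rather than in mean to an $L^2$ bound for the martingale.
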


The previous theorem establishes that, under the sufficient condition for the coexistence stated in Theorem~\ref{thm:coexistence-positive}, the number of preys grows geometrically at the rate given by $\trho_2 \tmu$ while the number of predators grows geometrically at the rate given by $\rho_2\mu$.

\begin{nota}
We note that \eqref{eq:cond-sum-rs} is the equivalent to the conditions established in Theorems 6 and 7 in \cite{art-2004d} to determine the asymptotic mean growth rate for controlled branching processes. Moreover, a sufficient condition for \eqref{eq:cond-sum-rs} to hold is that there exists $\nu,\tilde{\nu}>0$ such that
\begin{align*}\label{eq:cond-sum-rs-suff}
\limsup_{x\to\infty}\ (\rho_2-r(x))x^{\nu}<\infty,\quad \text{ and }\quad \limsup_{x\to\infty}\ (\trho_2-\tr(x))x^{\tilde{\nu}}<\infty.
\end{align*}
These conditions are satisfied by the functions proposed in Section \ref{sec:Definition} of this manuscript.

\end{nota}



\bigskip

\section{A discrete-time predator-prey branching process with carrying capacity}\label{sec:carrying}

The results given in the previous section imply an exponential growth of both species on the survival event. As we pointed out in the Introduction, this does not seem to be a usual feature of models in nature when we observe them far away from the initial generations. Therefore, a more reasonable situation than the one considered in the model introduced in \eqref{def:model-total-indiv} is to contemplate predator-prey systems evolving in an environment with a limited amount of resources. Since we consider that the preys are the primary food supply of predators, this food constraint will have a direct impact in the growth of the prey population, and implicitly, also on the predator population through the interaction.

Thus, the process that we need to model such situations should include the death of predators as a consequence of the scarcity of preys and the death of preys as a result of the lack of resources or because of their capture by predators. We focus on the case that predators hunt mature enough individuals. This means that preys compete for their food resources and then, when they have grown up, they can be captured by the predators. Thus, in the control phase we distinguish two stages: competition of preys for the food resources and then, interaction between survivor preys and predators. Let us define formally these ideas.

\bigskip

A \emph{predator-prey density-dependent branching process (PPDDBP) with carrying capacity} is the process $\{(Z_n,\tZ_n)\}_{n\in\N_0}$ defined as:

\begin{equation*}\label{def:model-carrying-capacity}
(Z_0,\tZ_0)=(z_0,\tilde{z}_0),\qquad (Z_{n+1},\tZ_{n+1})=\left(\sum_{i=1}^{\varphi_n(Z_{n},\tilde{\phi}_n(\tZ_n))}X_{ni},\sum_{i=1}^{\tphi_n(Z_n,\tilde{\phi}_n(\tZ_n))}\tX_{ni}\right),\quad n\in\N_0,
\end{equation*}
where $(z_0,\tilde{z}_0) \in \mathbb{N}^2$, the empty sums are considered to be 0 and the r.v.s of the family $\{X_{ni},\tX_{ni},\varphi_{n}(z,\tilde{z}), \tilde{\varphi}_{n}(z,\tilde{z}): n,z,\tilde{z}\in\N_0, i\in\N\}$ satisfy the conditions \ref{cond:independence}-\ref{cond: s y st in mu} in Section~\ref{sec:Definition}. 
We assume that this family is independent of the family $\{\tilde{\phi}_{n}(\tz): n,\tilde{z}\in\N_0\}$, where the r.v.s are also independent and non-negative integer valued. Moreover, we assume that for $\tz\in\N$ and $n\in\N_0$, the variable $\tilde{\phi}_{n}(\tz)$ follows a binomial distribution with size $\tz$ and probability of success $\ts(\tz,K)$, where $\ts(\cdot,K):[0,\infty)\to [0,1]$ is a continuous and strictly decreasing function depending on some parameter $K>0$. This quantity $\ts(\tz,K)$ represents the probability of survival of some prey whenever there are $\tz$ preys in the ecosystem that compete for the food resources. Moreover, if the food supply is limited, then the probability of survival of each survivor prey should go to zero as the population size increases, therefore we assume that 
\begin{align}\label{eq:cond-s-extinct}
\lim_{\tz\to\infty}\ts(\tz,K)=0.
\end{align}
We also note that if there is no food constraint for the preys (that is, if $K\to\infty$), then we should obtain the model introduced in Section~\ref{sec:Definition}. Thus, we also assume that  $\lim_{K\to\infty}\ts(\tz,K)=1$. This function $\ts(\cdot,K)$ enables us to introduce the carrying capacity of the environment in our model. One can propose several well-known functions for $\ts(\cdot,K)$. Indeed, for $\tz\in\N$ we have:
\begin{itemize}
\item Beverton-Holt model: $\ts(\tz,K)=\frac{K}{K+\tz}$,\quad for $K>0$.
\item Hassel model: $\ts(\tz,K)=\frac{K^{v}}{(K+\tz)^{v}}$,\quad for $K>0$, and $v\geq 1$.
\item Ricker model: $\ts(\tz,K)=v^{-\tz/K}$,\quad for $K>0$, and $v>1$.
\end{itemize}

The dynamics of this process is similar to the one in Section~\ref{sec:Definition}, and the difference between them lies on the control phase. As indicated above, we distinguish two stages within this phase. In the first stage, the number of preys decreases as a result of the limited amount of food supply for them in the environment. We model this fact through the random function $\tilde{\phi}_{n}(\cdot)$ in such a way that if there are $\tz$ preys in the environment, then $\tilde{\phi}_{n}(\tz)$ of them survive despite the lack of resources. Next, there is a second phase when the interplay between mature preys and predators occurs. Thus, if at the beginning of the control phase there were $z$ predators and $\tz$ preys, then the $z$ predators hunt on the $\tilde{\phi}_{n}(\tz)$ survivor individuals in the prey population. Finally, we remark that one might think of including some carrying capacity for the predator population as well. However, we focus on systems where the preys are the main food resource of the predators so we do not include this possibility.

\bigskip

The carrying capacity of the environment does not allow for the unlimited growth of preys and  the extinction of the entire system therefore occurs almost surely in this model. We establish the following result to express this idea.

\begin{teorema}\label{thm:as-extinction-carrying}
Let $\{(Z_n,\tZ_n)\}_{n\in\N_0}$ be a PPDDBP with carrying capacity, then for any initial values $i,j\in\N$
\begin{align*}
P_{(i,j)}(Z_n\to 0,\tZ_n\to 0)=1.
\end{align*}
\end{teorema}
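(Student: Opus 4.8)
The plan is to prove that the two populations die out \emph{sequentially}: first I would show that the prey population becomes extinct almost surely, and then that, once the preys are gone, the predators follow. This bypasses any appeal to an extinction--explosion dichotomy for the modified process. The whole argument rests on the carrying--capacity assumption \eqref{eq:cond-s-extinct}, which forces the per--capita reproduction rate of the preys to fall below $1$ once the prey population is large.

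First I would establish a one--step drift estimate for the prey population. Writing $\F_n=\sigma((Z_0,\tZ_0),\dots,(Z_n,\tZ_n))$ and conditioning successively on the competition variable $\tilde{\phi}_n(\tZ_n)$ (binomial with mean $\tZ_n\,\ts(\tZ_n,K)$) and on the predation variable $\tphi_n(Z_n,\tilde{\phi}_n(\tZ_n))$, and using $\tr(\cdot)\le\trho_2$ together with conditions \ref{cond:independence}--\ref{cond:rep-prey}, I would obtain
\begin{equation*}
E[\tZ_{n+1}\mid\F_n]=\tmu\,E\big[\tilde{\phi}_n(\tZ_n)\,\tr(\tilde{\phi}_n(\tZ_n)/Z_n)\mid\F_n\big]\le \trho_2\tmu\,\tZ_n\,\ts(\tZ_n,K),
\end{equation*}
where the middle equality is taken for $Z_n\ge1$, the case $Z_n=0$ being handled by condition \ref{cond:0-predators} and giving the same final bound. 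Since $\lim_{\tz\to\infty}\ts(\tz,K)=0$ by \eqref{eq:cond-s-extinct}, I can fix a threshold $\tz^\ast\in\N$ with $\trho_2\tmu\,\ts(\tz,K)\le 1/2$ for all $\tz\ge\tz^\ast$; thus, whenever $\tZ_n\ge\tz^\ast$ the prey population has a strict downward drift, $E[\tZ_{n+1}\mid\F_n]\le\tfrac12\tZ_n$. A crude version of the same bound also yields $E[\tZ_n]\le\tmu^{\,n}j<\infty$, which I use below.

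Next I would prove that the preys become extinct almost surely. Let $G=\{\tZ_n\ge1\ \forall n\}$ be the prey--survival event. Stopping the prey chain at $\tau=\inf\{m\ge N:\tZ_m<\tz^\ast\}$ turns $\{\tZ_{(N+n)\wedge\tau}\}_n$ into a non--negative supermartingale, and the drift bound yields $\tz^\ast\,P(\tau=\infty)\le(1/2)^n\,E[\tZ_N]\to0$; hence the preys cannot remain above $\tz^\ast$ forever and, almost surely on $G$, one has $\tZ_n\in\{1,\dots,\tz^\ast-1\}$ for infinitely many $n$. Because $\ts(\cdot,K)$ is strictly decreasing with $\ts(\tz,K)<1$ for $\tz\ge1$, from any such low state the probability that all preys fail the competition stage is at least $\delta:=(1-\ts(1,K))^{\tz^\ast-1}>0$, and in that case $\tilde{\phi}_n(\tZ_n)=0$ forces $\tZ_{n+1}=0$. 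Applying the conditional Borel--Cantelli lemma to these uniformly lower--bounded one--step extinction probabilities along the infinitely many visits to $\{1,\dots,\tz^\ast-1\}$ shows that extinction occurs almost surely on $G$, whence $P(G)=0$ and $\tZ_n\to0$ a.s.

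Finally I would deal with the predators. Since $\tZ=0$ is absorbing for the prey chain, on the (now full--probability) event $\{\tZ_n\to0\}$ the first hitting time $N=\inf\{n:\tZ_n=0\}$ is an a.s.\ finite stopping time and $\tZ_m=0$ for all $m\ge N$. By condition \ref{cond:0-preys}, for $m\ge N$ each predator independently survives the control phase with probability $\rho_1$ and then reproduces with mean $\mu$, so by the strong Markov property $\{Z_m\}_{m\ge N}$ is an ordinary Galton--Watson process whose offspring mean equals $\rho_1\mu<1$ by \eqref{eq: rho1m<1<rho2m}. A subcritical Galton--Watson process started from the finite value $Z_N$ dies out almost surely, giving $Z_n\to0$ a.s.\ and hence $P_{(i,j)}(Z_n\to0,\tZ_n\to0)=1$. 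The main obstacle is the third paragraph: converting ``mean drift to $0$'' into genuine almost--sure extinction. Because the drift only acts above $\tz^\ast$, a single global supermartingale does not suffice, and the delicate points are the stopping--time argument ruling out escape to infinity and the uniform lower bound $\delta$ on the one--step extinction probability from the finite band $\{1,\dots,\tz^\ast-1\}$, which together feed the conditional Borel--Cantelli lemma.
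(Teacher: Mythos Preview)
Your proof is correct, but it follows a genuinely different route from the paper's.

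The paper does not prove prey extinction directly. Instead it (i) observes that Proposition~\ref{prop:prey-predator-fixation}~\ref{prop:predator-fixation} (no predator fixation) carries over verbatim to the carrying-capacity model, (ii) derives the same drift bound $E[\tZ_{n+1}\mid\F_n]\le\trho_2\tmu\,\ts(\tZ_n)\tZ_n$ you obtain, and (iii) uses the stopped-supermartingale argument of Theorem~\ref{thm:coexistence-0} only to conclude $P_{(i,j)}(\tZ_n\to\infty)=0$. It then finishes by appealing implicitly to the analogue of Proposition~\ref{prop:extexp} for the modified process: once predator fixation and both events entailing $\tZ_n\to\infty$ are ruled out, total extinction is the only alternative.

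Your argument trades that appeal to the four-way dichotomy for a direct proof that $\tZ_n\to0$ a.s.: the supermartingale step gives recurrence to the band $\{1,\dots,\tz^\ast-1\}$, the strict decrease of $\ts(\cdot,K)$ gives the uniform one-step extinction probability $\delta=(1-\ts(1,K))^{\tz^\ast-1}>0$, and the conditional Borel--Cantelli lemma upgrades ``returns infinitely often'' to ``hits $0$''. This is more work than the paper spends, but it is entirely self-contained: you never need to check that Proposition~\ref{prop:extexp} still holds after inserting the competition control $\tilde{\phi}_n$ (it does, for the same reasons, but the paper does not spell this out). Your final step, identifying the post-prey-extinction predator chain with a subcritical Galton--Watson process of mean $\rho_1\mu<1$, is equivalent to the paper's invocation of the controlled-branching result behind Proposition~\ref{prop:prey-predator-fixation}~\ref{prop:predator-fixation}.
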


To illustrate this result and show the difference of this model regarding the behaviour of the one in Section \ref{sec:Definition} we now present the following example.

\begin{ejemplo}
We simulated the trajectory of a process with the same parameters that the one in Example~\ref{ej:exponential}, but with a carrying capacity $K=1000$.  We plotted the evolution of the number of individuals of each species in Figure~\ref{fig:path-density-sit1-carrying} (left), where we observe an oscillating behaviour in contrast to the case of Example~\ref{ej:exponential}. This is caused by the introduction of the carrying capacity in the prey population. The presence of this parameter modulates the growth of this population, and as a consequence, neither the predator population nor the prey population can grow exponentially. This fluctuating behaviour also holds for the ratio of the survivor prey to the number of survivor predators and in this case, also illustrated in Figure~\ref{fig:path-density-sit1-carrying} (centre), those quantities are around $\gamma=0.5$, with some occasional peaks in some generations. Finally, we also show the sharp rise of the probability that the entire system is extinct at generation $n$ as $n$ increases.


\begin{figure}[H]
\centering\includegraphics[width=0.32\textwidth]{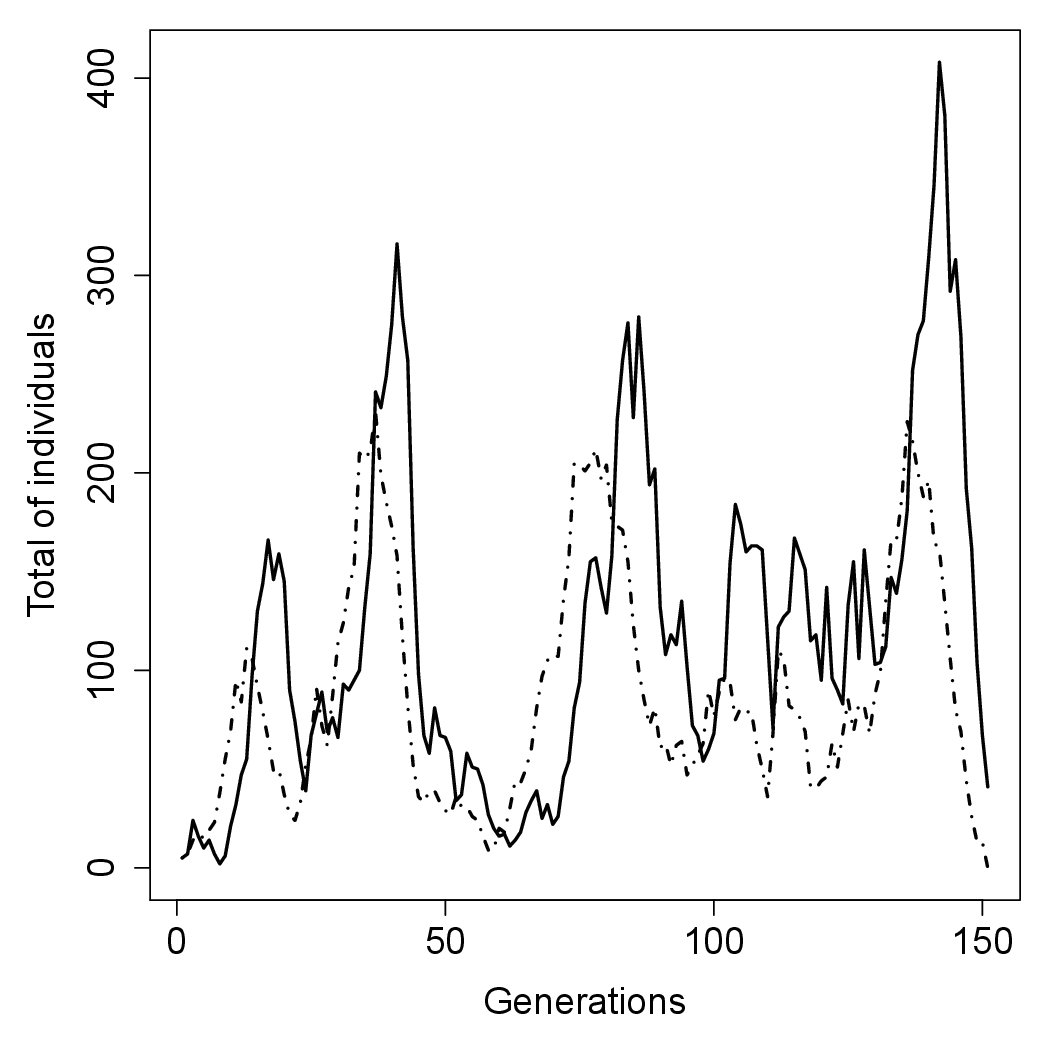}
\includegraphics[width=0.32\textwidth]{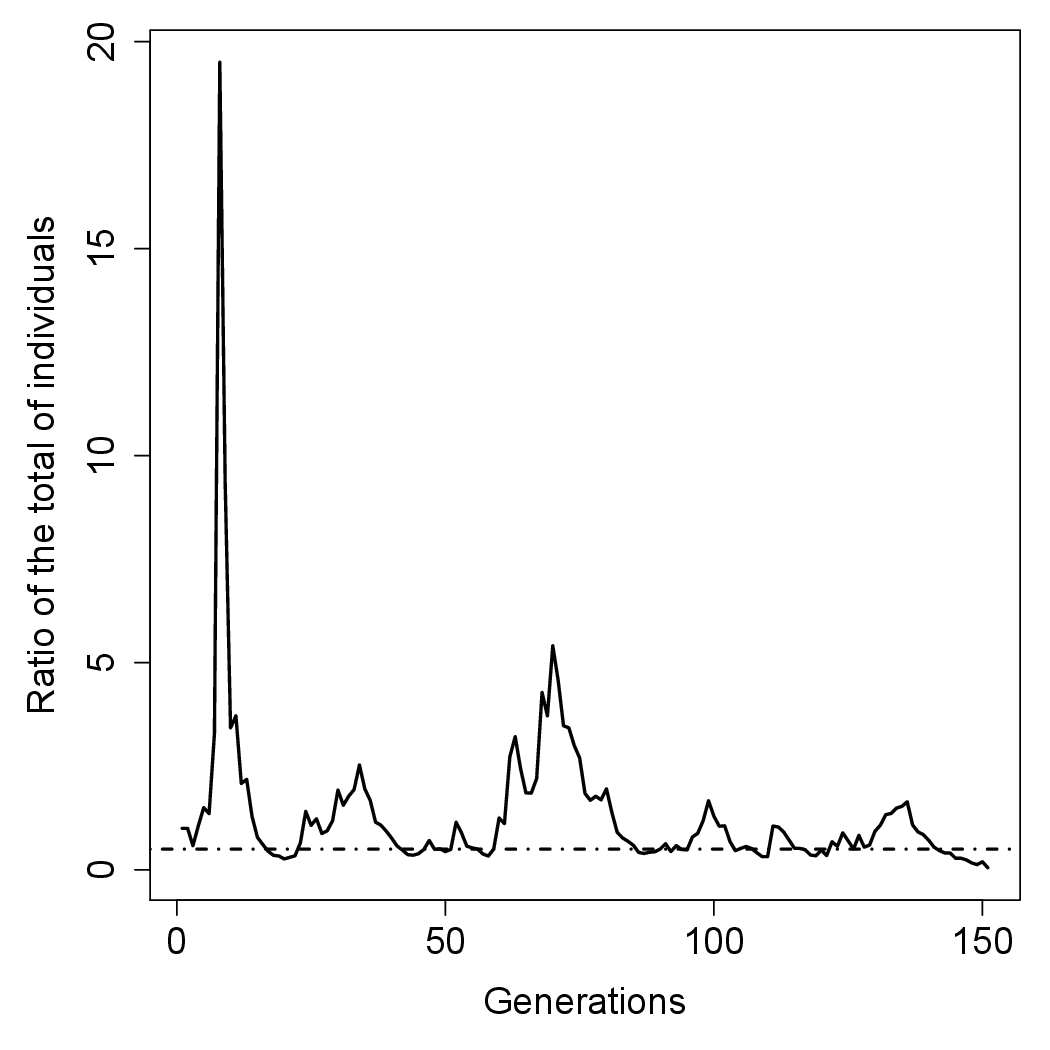}
\includegraphics[width=0.32\textwidth]{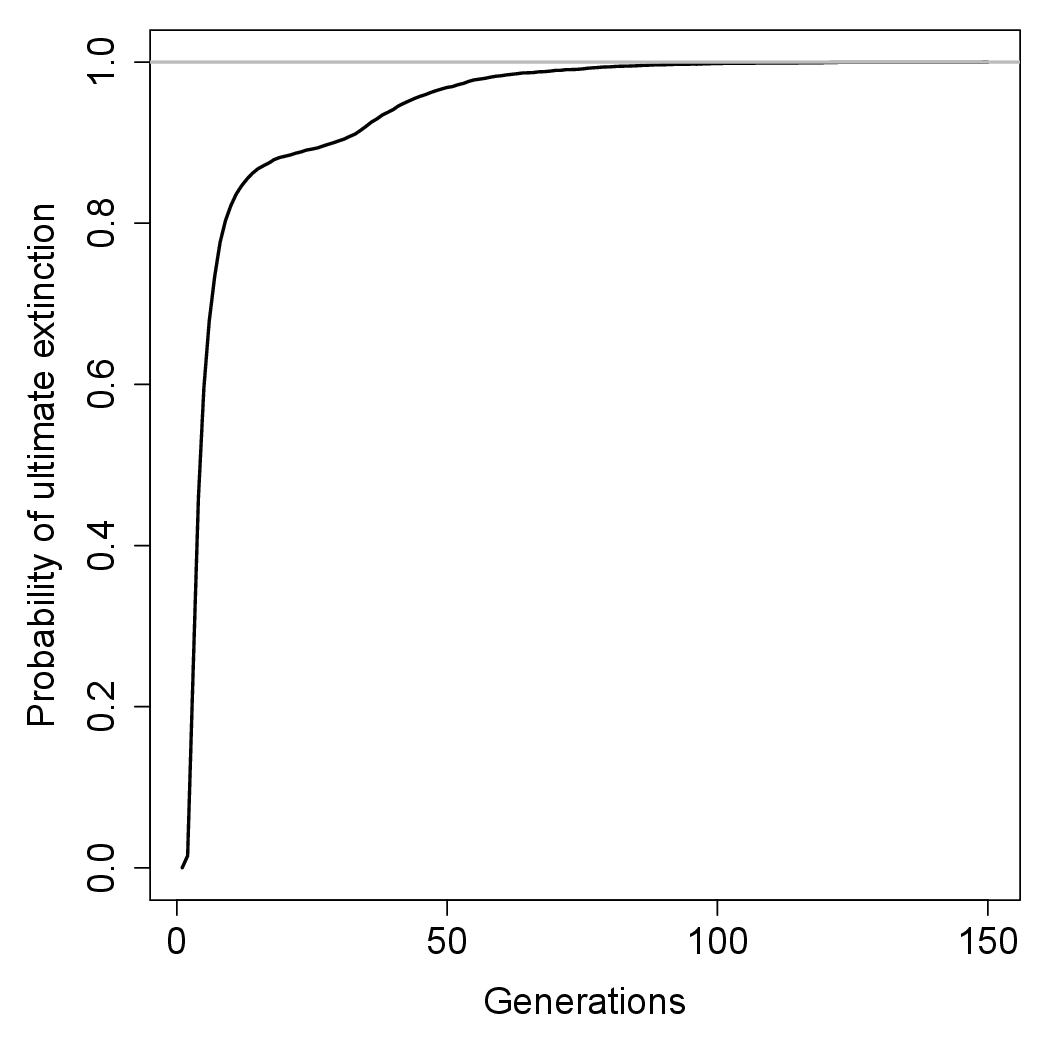}
\caption{Left: evolution of the number of predators (solid line) and preys (dashed-dotted line). Centre: evolution of the ratio of the total of preys to the total of predators before the control phase (black line). Horizontal line represents the value of $\gamma$. Right: evolution of the probability that the entire system is extinct over the generations. }\label{fig:path-density-sit1-carrying}
\end{figure}
\end{ejemplo}

\bigskip

\section{Discussion}\label{sec:Discussion}

In this paper, two predator-prey branching processes have been proposed. Those models enable us to study the generation-by-generation evolution of the number of individuals of two species, one of them is the prey and the other one is its natural predator. The main novelty of these models and which differentiates them from the models previously proposed in this field is the assumption that the probability of survival of each individual depends on the prey density per predator, and not only on the number of individuals of each species in absolute terms.

Two stages are taken into account in the definition of both models. The first one models the reproduction of the species, while the second one deals with the interaction of the species under different scenarios. In our first model we assume that the ecosystem has unlimited capacity while in the second one we consider a more realistic assumption and we suppose that the environment has limited resources, which implies a competition among preys for their food.

The theoretical results obtained for the first model are related to the fate of both species in the population. Specifically, it has been proved that the fixation of predators is not possible and that the prey fixation occurs with positive probability. Moreover, under certain conditions on the growth rate of the species the coexistence is also possible. In this last situation, the assumption of unlimited capacity of the ecosystem implies that, on those survival events, the species present an exponential growth with rate given by the maximum mean number of individuals that survive during the control phase. Regarding the second model, it has been proved that, as usual in branching models with carrying capacity, the ultimate extinction of the population occurs almost surely.

Our theoretical results have been illustrated by means of simulated examples. To conclude, we highlight that the two branching models proposed may show an oscillating behaviour during a large number of generations until the final extinction of the population, as typical in real predator-prey systems and in the majority of predator-prey models. We recall that in the case of the first model, this happens when the maximum mean growth rate of the predators is greater than the one for the preys.


\section*{Declaration of interest statement}

The authors declare no competing interest.

This is the preprint version of the following paper published in the journal \emph{Stochastic Models} (see the official journal website at \url{https://doi.org/10.1080/15326349.2022.2032755}):

\begin{itemize}
\item [] Cristina Gutiérrez \& Carmen Minuesa (2022). Predator–prey density-dependent
branching processes, Stochastic Models, 39:1, 265-292, DOI:\linebreak 10.1080/15326349.2022.2032755
\end{itemize}

\bigskip

\section*{Funding}


This research was funded by the Spanish State Research Agency [grant PID2019-108211GBI00/AEI/10.13039/501100011033].

\bigskip

\appendix

\section*{Appendix}\label{Appendix}

%
First of all, we provide the conditional moments of the process. This proposition will be useful for the proofs in the following appendices. To that end, let us write:
\begin{align*}
\mathcal{F}_n&=\sigma(Z_l,\tZ_l:l=0,\ldots,n),\quad n\in\N_0.
\end{align*}

\begin{proposicion}\label{prop:moments-PPBP}
Let $\{(Z_n,\tZ_n)\}_{n \in \mathbb{N}_0}$ be a PPDDBP and let us fix $n\in\N_0$. Then:
\begin{enumerate}[label=(\roman*),ref=\emph{(\roman*)}]
\item The conditional expectations of the number of individuals of each species are\label{prop:moments-PPBP-i}
\begin{align*}
E[Z_{n+1}|\F_n]&= Z_nr(\tZ_n/Z_n)\mu\quad a.s.\\
E[\tZ_{n+1}|\F_n]&= \tZ_n\tr(\tZ_n/Z_n)\tmu\quad a.s.
\end{align*}

\item The conditional variances of the number of individuals of each species satisfy:\label{prop:moments-PPBP-ii}
\begin{align*}
Var[Z_{n+1}|\F_n]&=Z_nr(\tZ_n/Z_n)\sigma^2+Z_nr(\tZ_n/Z_n)(1-r(\tZ_n/Z_n))\mu^2\quad a.s.\\
Var[\tZ_{n+1}|\F_n]&=\tZ_n\tr(\tZ_n/Z_n)\tilde{\sigma}^2+\tZ_n\tr(\tZ_n/Z_n)(1-\tr(\tZ_n/Z_n))\tmu^2\quad a.s.
\end{align*}
\end{enumerate}
\end{proposicion}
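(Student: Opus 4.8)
The plan is to condition throughout on $\F_n=\sigma(Z_l,\tZ_l:l\le n)$ and exploit the fact that, given $\F_n$, the random variables $Z_n$ and $\tZ_n$ are known constants, so that $Z_{n+1}$ becomes a random sum of i.i.d.\ reproduction variables with a \emph{random} number of summands whose conditional law is known. Concretely, conditional on $\F_n$ the number of summands $\varphi_n(Z_n,\tZ_n)$ is binomial with parameters $Z_n$ and $r(\tZ_n/Z_n)$ (assumption~\ref{cond:binomial-phi}), and by the independence stated in assumption~\ref{cond:independence} the reproduction variables $\{X_{ni}\}_i$ are independent of both this control variable and of $\F_n$. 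The two identities then follow from the conditional versions of Wald's identity (for the mean) and of the Blackwell--Girshick random-sum variance formula (for the variance). I would first carry out the predator computation in full and then note that the prey computation is entirely symmetric, replacing $\varphi_n$, $X_{ni}$, $r$, $\mu$, $\sigma^2$ by $\tphi_n$, $\tX_{ni}$, $\tr$, $\tmu$, $\tsigma^2$ and using assumption~\ref{cond:binomial-tphi}.

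For part~\ref{prop:moments-PPBP-i}, I would use the tower property, conditioning further on the $\F_n$-measurable enlargement generated by $\varphi_n(Z_n,\tZ_n)$. Writing $\varphi=\varphi_n(Z_n,\tZ_n)$, since the $X_{ni}$ are i.i.d.\ with mean $\mu$ and independent of $\varphi$ and $\F_n$, one gets $E\big[\sum_{i=1}^{\varphi}X_{ni}\,\big|\,\F_n,\varphi\big]=\varphi\mu$ almost surely, and taking conditional expectation given $\F_n$ yields $E[Z_{n+1}\mid\F_n]=\mu\,E[\varphi\mid\F_n]$. Plugging in the conditional binomial mean $E[\varphi\mid\F_n]=Z_n\,r(\tZ_n/Z_n)$ gives the stated formula.

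For part~\ref{prop:moments-PPBP-ii}, I would apply the conditional law of total variance: with $\varphi$ as above, the random-sum identity gives $\V(Z_{n+1}\mid\F_n)=E[\varphi\mid\F_n]\,\sigma^2+\V(\varphi\mid\F_n)\,\mu^2$ almost surely, where the first term comes from the within-sum variability of the $X_{ni}$ and the second from the variability of the number of summands. Substituting the conditional binomial mean and variance, $E[\varphi\mid\F_n]=Z_n\,r(\tZ_n/Z_n)$ and $\V(\varphi\mid\F_n)=Z_n\,r(\tZ_n/Z_n)\big(1-r(\tZ_n/Z_n)\big)$, produces exactly the claimed expression.

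The computations themselves are routine; the care required is in the justification, which is where I expect the only real subtlety to lie. First, one must verify that the random number of summands is independent of the summands \emph{even though} that number is a function of the $\F_n$-measurable state $(Z_n,\tZ_n)$; this is precisely what assumption~\ref{cond:independence} guarantees, and I would state the conditional independence carefully before invoking Wald and Blackwell--Girshick. Second, the expressions $r(\tZ_n/Z_n)$ and $\tr(\tZ_n/Z_n)$ require $Z_n\ge 1$, so strictly speaking the identities should be read on $\{Z_n\ge 1,\tZ_n\ge 1\}$, with the degenerate cases $Z_n=0$ or $\tZ_n=0$ handled separately through assumptions~\ref{cond:0-preys} and~\ref{cond:0-predators} and the fact that $(0,0)$ is absorbing; I would remark on this to keep the almost-sure statements unambiguous.
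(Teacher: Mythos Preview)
Your proposal is correct and follows essentially the same route as the paper: the paper introduces the enlarged $\sigma$-algebra $\mathcal{G}_n=\sigma(Z_l,\tZ_l,\varphi_l(Z_l,\tZ_l),\tphi_l(Z_l,\tZ_l):l\le n)$, applies the tower property to obtain $E[Z_{n+1}\mid\F_n]=\mu\,E[\varphi_n(Z_n,\tZ_n)\mid\F_n]$, and then the law of total variance to get $\V[Z_{n+1}\mid\F_n]=\sigma^2 E[\varphi_n\mid\F_n]+\mu^2\V[\varphi_n\mid\F_n]$, exactly as you describe. Your additional remark about the degenerate cases $Z_n=0$ or $\tZ_n=0$ is not made explicit in the paper's proof but is a welcome clarification.
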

\begin{proof}
First, let us introduce the following $\sigma$-algebra. For $n\in\N$, let:
\begin{align*}
\mathcal{G}_{n}&=\sigma(Z_{l},\tZ_{l},\vphi_{l}(Z_{l},\tilde{Z}_{l}),\tphi_{l}(Z_{l},\tilde{Z}_{l}):l=0,\ldots,n),\label{eq:sigma-algebra-G}
\end{align*}
then, we have that $\mathcal{F}_{n}\subseteq \mathcal{G}_{n}$. Now, the computation of the conditional expectation is immediate. Indeed, for the predators we have 
$$E\left[Z_{n+1}|\mathcal{F}_n\right]=E\left[E\left[\sum_{i=1}^{\vphi_n(Z_n,\tZ_n)}X_{ni}|\mathcal{G}_n\right]|\mathcal{F}_n\right]=\mu E\left[\vphi_n(Z_n,\tZ_n)|\mathcal{F}_n\right]=\mu r(\tZ_n/Z_n)Z_n\quad a.s.,$$
and a similar argument gives the proof for the preys.


For the conditional variances, using the law of total variance we obtain
\begin{align*}
\V[Z_{n+1}|\mathcal{F}_n]
&=\sigma^ 2E\left[\vphi_n(Z_n,\tZ_n)|\mathcal{F}_n\right]+\mu^2 \V\left[\vphi_n(Z_n,\tZ_n)|\mathcal{F}_n\right]\\
&=Z_nr(\tZ_n/Z_n)\sigma^2+Z_nr(\tZ_n/Z_n)(1-r(\tZ_n/Z_n))\mu^2\quad a.s.
\end{align*}
The proof for the preys follows with an analogous reasoning.


\end{proof}

\bigskip

\section{Proofs of the results in Section \ref{sec:Extinction}}\label{ape:extinction}

\begin{Prf}[Theorem \ref{thm:coexistence-0}]
We shall prove the first part. Given that
$$\left\{\limsup_{n\to\infty} \frac{\tZ_n}{Z_n}\leq \gamma\right\}\subseteq \bigcup_{N=1}^\infty \left\{\sup_{n\geq N} \frac{\tZ_n}{Z_n}\leq \gamma\right\},$$
it is sufficient to prove that for each $N\in\N$,
\begin{equation*}\label{equ:prob-sup}
P_{(i,j)}\left(\left\{\sup_{n\geq N} \frac{\tZ_n}{Z_n}\leq \gamma\right\}\bigcap \Big\{Z_n\to\infty\Big\}\right)=0.
\end{equation*}

To that end, note that using Proposition \ref{prop:moments-PPBP}~\ref{prop:moments-PPBP-i} we have for each $n\in\N_0$,
\begin{equation*}\label{equ:mart-Z}
E[Z_{n+1}|\mathcal{F}_n]=  Z_n \mu r(\tZ_n/Z_n)\leq Z_n \ \mbox{ a.s.\quad on } \{\tZ_n/Z_n\leq \gamma\}.
\end{equation*}

Now, let us fix $N>0$, and introduce the sequence of r.v.s $\{Y_n\}_{n\in\N_0}$ defined as:
$$Y_n=\begin{cases}
Z_{N+n}, & \mbox{ if } N+n\leq \tau(\gamma),\\
Z_{\tau(\gamma)}, & \mbox{ if } N+n>\tau(\gamma),\end{cases}\qquad n\in\N_0,$$
where $\tau(\gamma)$ is the stopping time:
$$\tau(\gamma)=
\begin{cases}
\infty, & \mbox{ if } \sup_{n\geq N}\frac{\tZ_n}{Z_n} \leq \gamma,\\
\min\big\{n\geq N: \frac{\tZ_{n}}{Z_{n}} > \gamma\big\}, & \mbox{ otherwise}.
\end{cases}$$

The proof finishes with the same arguments used in Theorem~4.1  \cite{GutierrezMinuesa2021}.

\end{Prf}

\vspace{2ex}

Before proving Theorem~\ref{thm:coexistence-positive}, we establish the following result whose proof is omitted because it is obtained by standard procedures in branching processes.

\begin{lema}\label{lem:states}
Let $\{(Z_n,\tZ_n)\}_{n\in\N_0}$ be a PPDDBP. If $p_0+p_1<1$ and $\tilde{p}_0+\tilde{p}_1<1$, then the sets $\{(i,0): i>0\}$, $\{(0,j): j>0\}$ and $\{(i,j): i,j>0\}$ are classes of communicating states and each state leads to the state $(0,0)$. Furthermore, the process can move from the last set to the others in one step.
\end{lema}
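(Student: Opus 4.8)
The plan is to treat $\{(Z_n,\tZ_n)\}_{n\in\N_0}$ as the homogeneous Markov chain it is and to read off everything from its one-step transition structure, which is governed entirely by the control variables $\varphi_n,\tphi_n$ together with the offspring variables $X_{ni},\tX_{ni}$. Three assertions must be checked: that every state reaches $(0,0)$, that the interior set $\{(i,j):i,j>0\}$ reaches each boundary set in one step, and that each of the three sets is a communicating class. I would dispatch the first two first, since they follow directly from the control phase, and then assemble the communicating-class statement.

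For \emph{each state leads to $(0,0)$}: from an arbitrary state the control phase can annihilate both species simultaneously. For $(i,j)$ with $i,j>0$ the event $\{\varphi_n(i,j)=0,\ \tphi_n(i,j)=0\}$ has probability $(1-r(j/i))^i(1-\tr(j/i))^j>0$, because $r,\tr$ take values in $(0,1)$; on this event both empty sums give $(Z_{n+1},\tZ_{n+1})=(0,0)$. On the boundary the special conditions \ref{cond:0-preys} and \ref{cond:0-predators} make one control degenerate at $0$ and leave the other binomial with strictly positive failure probability, so the same one-step annihilation has positive probability. This yields the second assertion.

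For the \emph{one-step move from the interior to the boundaries}, fix $(i,j)$ with $i,j>0$. Using the independence in \ref{cond:independence} of $\tphi_n$ from the pair $(\varphi_n,\{X_{nl}\})$, the event ``all preys die but the predators leave at least one descendant'' factorises, with $P(\tphi_n(i,j)=0)=(1-\tr(j/i))^j>0$ and $P(Z_{n+1}>0)>0$; the latter holds because $p_0+p_1<1$ forces some $k^*\ge 2$ with $p_{k^*}>0$, so a single surviving predator can produce $k^*$ offspring. Hence $(i,j)$ reaches some $(i',0)$ with $i'>0$ in one step, and the symmetric argument (all predators die, a prey survives and reproduces, using $\tp_0+\tp_1<1$) gives a one-step transition to some $(0,j')$ with $j'>0$.

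For the \emph{communicating-class} statement I would separate closure from internal communication. Closure is clean: by \ref{cond:0-preys} any state $(i,0)$ forces $\tphi_n=0$, hence $\tZ_{n+1}=0$, so the prey coordinate can never become positive again; thus $\{(i,0):i>0\}\cup\{(0,0)\}$ is closed, and since $(0,0)$ is absorbing, no $(i,0)$ communicates with $(0,0)$, the interior, or the other boundary. The symmetric fact for $\{(0,j):j>0\}$ follows from \ref{cond:0-predators}, and consequently the interior is closed under the communication relation as well. It then remains to show that within each set all states intercommunicate, and this is the step I expect to be the main obstacle. The mechanism is the binomial thinning: from any $(i,0)$ the event $\{\varphi_n(i,0)=1\}$ has positive probability, after which $Z_{n+1}=X_{n1}$, so one reaches $(k,0)$ for every $k$ in the support of $p$; combining such up- and down-moves shows the positive predator states intercommunicate, and symmetrically for preys and for the interior (where the two species are controlled jointly). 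The delicate point is that hitting a prescribed target may force the offspring sum to equal a specific small value, which is achievable for \emph{all} targets only when the offspring support is rich enough (for instance $p_0,p_1>0$, or more generally when the numerical semigroup generated by the support is cofinite with the appropriate gcd condition). I would therefore phrase the reachability argument explicitly in terms of the offspring supports and the thinning-to-one-survivor construction, making the support hypotheses under which every pair of positive states is mutually accessible precise.
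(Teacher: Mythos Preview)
The paper does not actually give a proof of this lemma: it states that the proof ``is omitted because it is obtained by standard procedures in branching processes.'' So there is nothing in the text to compare your argument against line by line. Your treatment of absorption into $(0,0)$ and of the one-step transitions from the interior $\{(i,j):i,j>0\}$ to each boundary is correct and is exactly what one would write down; the factorisations via \ref{cond:independence}, \ref{cond:0-preys}, \ref{cond:0-predators} are the right tools.

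Your hesitation about the full communicating-class assertion is not just caution, it points at a real issue with the lemma \emph{as stated}. The paper's standing hypotheses give only $p_0+p_1<1$ and $\tp_0+\tp_1<1$; nothing forces $p_0>0$ or $p_1>0$. If, for instance, the predator offspring law is supported on $\{2,3\}$, then from $(2,0)$ the control $\varphi_n(2,0)\in\{0,1,2\}$ yields $Z_{n+1}\in\{0\}\cup\{2,3\}\cup\{4,5,6\}$, so $(1,0)$ is never reached from $(2,0)$ and $\{(i,0):i>0\}$ fails to be a single communicating class. The same obstruction can occur in the interior set. Hence the support caveat you raise is not optional: without an assumption such as $p_1>0$ (and $\tp_1>0$), or an aperiodicity/semigroup condition on the supports, the statement is simply false.

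What rescues the paper is that the lemma is only invoked (in the proofs of Theorem~\ref{thm:coexistence-positive} and Proposition~\ref{prop:existencia de A}) to pass from ``$P_{(i,j)}(A)>0$ for suitably large $i$ and $j>Mi$'' to ``$P_{(i,j)}(A)>0$ for all $i,j\in\N$''. For that, one only needs upward reachability: from any interior state one can reach, with positive probability, some $(i',j')$ with $i'$ as large as desired and $j'/i'>M$. This does follow from the hypotheses alone, by thinning the predators down to one survivor while letting all preys survive and reproduce (each producing $\tilde k^*\ge 2$ offspring) for enough generations to drive the ratio above $M$, and then letting both species survive and reproduce fully for enough further generations to make $i'$ large. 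So rather than trying to repair the full communication claim, the cleanest fix is to prove and cite this weaker reachability statement, which is all the paper actually uses.
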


\vspace{2ex}

\begin{Prf}[Theorem \ref{thm:coexistence-positive}]
We start the proof by noticing that given that $1<\rho_2\mu<\trho_2\tmu$, 
we can fix $\epsilon>0$ small enough such that if $\zeta_1=\rho_2\mu-\epsilon$, $\zeta_2=\rho_2\mu+\epsilon$, and $\tilde{\zeta}_1=\trho_2\tmu-\epsilon$, then the following conditions hold:
\begin{enumerate}[label=\emph{(\alph*)},ref=\emph{(\alph*)}]
\item $1<\zeta_1<\zeta_2<\tilde{\zeta}_1$.\label{eq:ineq-zeta}
\item There exists $M> \gamma$ such that $|\mu\rho_2-\mu r(x)|\leq\epsilon/2,$  and $|\tilde{\mu}\trho_2-\tilde{\mu}\tr(x)|\leq\epsilon/2,$  for any $x\geq M$.\label{eq:continuity-epsilon}
\end{enumerate}
Indeed, we can take $0<\epsilon<\min\{(\rho_2\mu-1)/2,(\trho_2\tilde{\mu}-\rho_2\mu)/2\}$ and then \ref{eq:ineq-zeta} holds. Now, condition \ref{cond: s y ts limits in infty} in Section~\ref{sec:Definition} guarantees that there is $M>\gamma$ satisfying \ref{eq:continuity-epsilon}.

\medskip

\medskip

We shall prove that there exists $i\in\N$ sufficiently large such that for any $j>Mi$,
\begin{align*}
P_{(i,j)}(Z_n\to \infty,\tZ_n\to\infty)>0,
\end{align*}
because if this holds, since the states of the set $\{(i,j): i,j\in\N\}$ are communicating (see Lemma \ref{lem:states}), then the result holds for every $i,j\in\N$.

Let us fix the previous $\epsilon>0$, and introduce the following events:
$$A_n=\{\zeta_1Z_n<Z_{n+1}<\zeta_2Z_n, \ \tilde{\zeta}_1\tZ_n<\tZ_{n+1}\},\quad n\in\N_0.$$ 
Observe that $A=\cap_{n=0}^{\infty}A_n\subseteq \cap_{n=0}^{\infty}\{\tZ_n/Z_n\geq M\}\cap \{Z_n\to\infty, \tZ_n\to\infty\}$ if $j>Mi$. 
From this, we get
\begin{align}\label{probAinfty}
P_{(i,j)}\big(Z_n\to\infty,\tZ_n\to\infty\big)
&\geq P_{(i,j)}\left(\cap_{n=0}^{\infty}A_n\right)\nonumber\\
&=\lim_{n\to\infty}P_{(i,j)}\left(\cap_{k=0}^{n}A_k\right)\nonumber\\
&=\lim_{n\to\infty}P_{(i,j)}(A_0)\prod_{k=1}^{n}P\left(A_k|\cap_{l=0}^{k-1}A_l\cap \{Z_0=i,\tZ_0=j\}\right).
\end{align}

We now define the next sets
$$D_k=\{(i',j')\in\N^2: \zeta_1^{k}i<i'<\zeta_2^{k}i,\ \tilde{\zeta}_1^{k}j<j'\},\quad \text{ for each }k\in\N,$$
and note that 
$$\cap_{l=0}^{k-1}A_l\cap \{Z_0=i,\tZ_0=j\} \cap \{(Z_k,\tZ_k)=(i',j'): (i',j')\in \N^2\},$$
is a partition of the event $\cap_{l=0}^{k-1}A_l \cap \{Z_0=i,\tZ_0=j\}$. 
We also remark that, since $j>M i$ and $\tilde{\zeta}_1>\zeta_2>1$, each pair $(i',j') \in D_k$ satisfies $j'>\tilde{\zeta}_1^{k}j>\zeta_2^{k}j>\zeta_2^{k}M i>Mi'$. 

%

\medskip

Now, by using the Markov property, we have that
\begin{align}\label{probA}
P\big(A_k|\cap_{l=0}^{k-1}A_l&\cap \{Z_0=i,\tZ_0=j\}\big)=\nonumber\\
&=P\left(A_{k}|\cup_{i'=1}^\infty\cup_{j'=1}^\infty\big(\cap_{l=0}^{k-1}A_l\cap \{(Z_k,\tZ_k)=(i',j')\}\cap \{Z_0=i,\tZ_0=j\}\big)\right)\nonumber\\
&\geq \inf_{(i',j')\in D_k} P\left(A_{k}|\cap_{l=0}^{k-1}A_l\cap \{(Z_k,\tZ_k)=(i',j')\}\cap \{Z_0=i,\tZ_0=j\}\right)\nonumber\\
&=\inf_{(i',j')\in D_k} P\left(A_{k}|(Z_k,\tZ_k)=(i',j')\right)\nonumber\\
&=\inf_{(i',j')\in D_k} P_{(i',j')}(A_0).
\end{align}

%

As a consequence, we only need to obtain a convenient lower bound for this last infimum, or equivalently, a suitable upper bound for $P_{(i',j')}(A_0^c)$ with $(i',j')\in D_k$. To that end, we use that
$$A^c_0\subseteq \{Z_1\leq \zeta_1 Z_0\}\cup \{Z_1\geq \zeta_2 Z_0\}\cup \{\tZ_1\leq \tilde{\zeta}_1 \tZ_0\}.$$

First, let us fix $(i',j')\in D_k$. By conditions \ref{eq:ineq-zeta} and \ref{eq:continuity-epsilon}, Proposition \ref{prop:extexp} and Chebyschev's inequality, we obtain
\begin{align}\label{equ:cotaZ1}
P_{(i',j')}\big(Z_1\leq \zeta_1 Z_0\big)&=
P_{(i',j')}\Big(\epsilon Z_0-\rho_2\mu Z_0+\mu r(\tZ_0/Z_0)Z_0\leq E[Z_1|Z_0=i',\tZ_0=j']-Z_1\Big)\nonumber\\
&\leq P_{(i',j')}\Big(i'\big(\epsilon-\rho_2\mu+\mu r(j'/i'))\leq |E[Z_1|Z_0=i',\tZ_0=j']-Z_1|\Big)\nonumber\\
&\leq P_{(i',j')}\Big(\frac{i'\epsilon}{2}\leq |E[Z_0|Z_0=i',\tZ_0=j']-Z_1|\Big)\nonumber\\
&\leq \frac{4 Var[Z_1|Z_0=i',\tZ_0=j']}{\epsilon^2 i'^2}\nonumber\\
&\leq \frac{K_1}{i'}\nonumber,
\end{align}
for some constant $K_1>0$.

%
%
%
%

\medskip

Analogously, we obtain that
\begin{align*}
P_{(i',j')}\big(Z_1\geq \zeta_2 Z_0\big)&\leq \frac{K_2}{i'}, \quad \text{ and } \quad P_{(i',j')}\big(\tZ_1\leq \tilde{\zeta}_1 \tZ_0\big)\leq \frac{K_3}{j'},
\end{align*}
for some constants $K_2,K_3>0$, and from all the above, we get that 
\begin{equation}\label{A0bound}
P_{(i',j')}(A_0)\geq 1-\frac{K_1+K_2}{i'}-\frac{K_3}{j'}.
\end{equation}
On the other hand, by taking $i>(K_1+K_2)+K_3/M$, with the same arguments using the fact that $j>Mi$, we obtain
\begin{equation}\label{PA0inicial}P_{(i,j)}(A_0)\geq 1-\frac{K_1+K_2}{i}-\frac{K_3}{j}>0.
\end{equation}

Therefore, taking into account \eqref{probAinfty}-\eqref{PA0inicial}, 
and the fact that $\zeta_1,\tilde{\zeta}_1>1$,
\begin{align*}
P_{(i,j)}(Z_n\to\infty,\tZ_n\to\infty)
&\geq P_{(i,j)}\left(\cap_{n=0}^{\infty}A_n\right)\\
&\geq P_{(i,j)}(A_0)\lim_{n\to\infty}\prod_{k=1}^{n}\inf_{(i',j')\in D_k} P_{(i',j')}(A_0)\\
&\geq \bigg(1-\frac{K_1+K_2}{i}-\frac{K_3}{j}\bigg)\prod_{k=1}^\infty\bigg(1-\frac{K_1+K_2}{i\zeta_1^{k}}-\frac{K_3}{j\tilde{\zeta}_1^{k}}\bigg)>0,
\end{align*}
for $i$ large enough and $j>Mi$, and this concludes the proof.
%
%
\end{Prf}

\bigskip

\section{Proofs of the results in Section \ref{sec:growth rates}}\label{ape:growth-rates}

\begin{Prf}[Proposition~\ref{prop:existencia de A}]
Let us fix $\epsilon,\zeta_1,\zeta_2,\tilde{\zeta}_1>0$ as in the proof of Theorem \ref{thm:coexistence-positive} (recall that $1<\zeta_1<\zeta_2<\tilde{\zeta}_1$), and consider again the events $A_n=\{\zeta_1Z_n<Z_{n+1}<\zeta_2Z_n, \ \tilde{\zeta}_1\tZ_n<\tZ_{n+1}\}$, with $n\in\N_0$. Let us now define the event $A=\cap_{n=0}^{\infty} A_n$, and recall that in the aforementioned proof we showed that $A\subseteq \{Z_n\to\infty, \tZ_n\to\infty\}$, and $P_{(i,j)}(A)>0$ for $i,j\in\N$ sufficiently large. Now, using the fact that the states of the set $\{(i,j): i,j\in\N\}$ are communicating (see Lemma \ref{lem:states}), then $P_{(i,j)}(A)>0$ for any $i,j\in\N$. 

First, \ref{prop:liminf greater than 1} is easily  obtained by taking into account that $\zeta_1>1$ and $\tilde{\zeta}_1>1$. Indeed,
$$\liminf_{n\to\infty}\frac{Z_{n+1}}{Z_n}\geq\zeta_1>1,\quad \text{ and } \quad
\liminf_{n\to\infty}\frac{\tZ_{n+1}}{\tZ_n}\geq\tilde{\zeta}_1>1 \ \text{ a.s. on } A.$$ 

On the other hand, \ref{prop:lim coc infty} is deduced by taking into account that $\zeta_2<\tilde{\zeta}_1$, and consequently,
$$\liminf_{n\to\infty}\frac{\tZ_n}{Z_n}>\frac{j}{i}\lim_{n\to\infty}\Big(\frac{\tilde{\zeta}_1}{\zeta_2}\Big)^n=\infty, \ \text{ a.s. on } \ A.$$ 
\end{Prf}

\bigskip

Before proving Theorem~\ref{teor:GR coexistence}, we establish the following lemma whose proof is obtained by Proposition \ref{prop:existencia de A}~\ref{prop:liminf greater than 1} and Chebyshev's inequality. 

\begin{lema}\label{lema:lim del cociente}
Let $\{(Z_n,\tZ_n)\}_{n\in\N_0}$ be a PPDDBP. If $\trho_2\tilde{\mu}>\rho_2\mu$, then
$$\lim_{n\to\infty}\frac{Z_{n+1}}{Z_n}=\rho_2\mu, \quad \text{ and } \quad 
\lim_{n\to\infty}\frac{\tZ_{n+1}}{\tZ_n}=\trho_2\tmu\ \text{ a.s. on } A,$$
with $A$ as in Proposition~\ref{prop:existencia de A}.
\end{lema}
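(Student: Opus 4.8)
The goal is to show that on the event $A$, the per-generation growth ratios $Z_{n+1}/Z_n$ and $\tZ_{n+1}/\tZ_n$ converge almost surely to $\rho_2\mu$ and $\trho_2\tmu$ respectively. The plan is to work on $A$, where by Proposition~\ref{prop:existencia de A}~\ref{prop:lim coc infty} we have $\tZ_n/Z_n\to\infty$, so that the survival probabilities $r(\tZ_n/Z_n)$ and $\tr(\tZ_n/Z_n)$ converge to their limiting values $\rho_2$ and $\trho_2$ by condition~\ref{cond: s y ts limits in infty} of Section~\ref{sec:Definition}. Combined with Proposition~\ref{prop:moments-PPBP}~\ref{prop:moments-PPBP-i}, this gives that the conditional expectations satisfy $E[Z_{n+1}\mid\F_n]/Z_n\to\rho_2\mu$ and $E[\tZ_{n+1}\mid\F_n]/\tZ_n\to\trho_2\tmu$ a.s.\ on $A$. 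The remaining task is to promote this convergence of conditional means to convergence of the actual ratios, i.e.\ to control the fluctuations of $Z_{n+1}$ and $\tZ_{n+1}$ around their conditional expectations.

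I would decompose the ratio as
\begin{equation*}
\frac{Z_{n+1}}{Z_n}=\frac{E[Z_{n+1}\mid\F_n]}{Z_n}+\frac{Z_{n+1}-E[Z_{n+1}\mid\F_n]}{Z_n}=\mu\, r(\tZ_n/Z_n)+\frac{Z_{n+1}-E[Z_{n+1}\mid\F_n]}{Z_n},
\end{equation*}
and similarly for the prey ratio. The first term tends to $\rho_2\mu$ as argued above, so it suffices to show the error term tends to $0$ a.s.\ on $A$. For this I would apply Chebyshev's inequality conditionally: using Proposition~\ref{prop:moments-PPBP}~\ref{prop:moments-PPBP-ii}, the conditional variance $Var[Z_{n+1}\mid\F_n]$ is at most a constant multiple of $Z_n$ (since $r(\cdot)\le 1$ and the variance expression is linear in $Z_n$). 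Hence for fixed $\delta>0$,
\begin{equation*}
P\Big(\Big|\frac{Z_{n+1}-E[Z_{n+1}\mid\F_n]}{Z_n}\Big|>\delta \,\Big|\,\F_n\Big)\le \frac{Var[Z_{n+1}\mid\F_n]}{\delta^2 Z_n^2}\le \frac{C}{\delta^2 Z_n}\quad\text{a.s.}
\end{equation*}
On $A$, Proposition~\ref{prop:existencia de A}~\ref{prop:liminf greater than 1} guarantees $\liminf Z_{n+1}/Z_n\ge\zeta_1>1$, so $Z_n$ grows at least geometrically; thus $\sum_n 1/Z_n<\infty$ on $A$. A conditional Borel--Cantelli argument (for instance, via the extended Borel--Cantelli lemma for adapted sequences) then shows that the error terms exceed $\delta$ only finitely often a.s.\ on $A$, and letting $\delta\downarrow 0$ along a countable sequence yields that the error term vanishes. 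The same reasoning applied to the prey component gives $\tZ_{n+1}/\tZ_n\to\trho_2\tmu$.

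The main obstacle is the rigorous execution of the conditional Borel--Cantelli step on the event $A$: the summability $\sum_n 1/Z_n<\infty$ holds only on $A$, not almost surely on the whole space, so the conditional probability bounds and the Borel--Cantelli conclusion must be localized to $A$. One clean way to handle this is to intersect with the events $\{Z_n\ge\zeta_1^n\, c\}$ for suitable $c>0$, which contain $A$ up to a null set by the geometric lower bound, thereby obtaining a deterministic summable bound $\sum_n 1/(\zeta_1^n c)<\infty$ on which the standard (unconditional) Borel--Cantelli lemma applies directly after taking expectations of the conditional probability bounds. Care is also needed because the bound on the conditional variance uses only $r(\cdot)\le1$, which is available from condition~\ref{cond:binomial-phi}, so no extra hypothesis beyond those already in force is required.
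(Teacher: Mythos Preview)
Your proposal is correct and follows essentially the same route as the paper: both arguments use Proposition~\ref{prop:moments-PPBP} to identify the target limits, apply Chebyshev's inequality to bound the conditional probability of a large deviation of $Z_{n+1}/Z_n$ from $r(\tZ_n/Z_n)\mu$ (respectively $\rho_2\mu$) by $C/Z_n$, use the geometric growth of $Z_n$ on $A$ to get $\sum_n 1/Z_n<\infty$ there, and then invoke the conditional (L\'evy) Borel--Cantelli lemma to conclude. The only cosmetic difference is that the paper bounds $|Z_{n+1}-\rho_2\mu Z_n|$ directly after first controlling $|\rho_2\mu-\mu r(\tZ_n/Z_n)|$ via condition~\ref{eq:continuity-epsilon}, whereas you split the ratio into the conditional-mean part and a fluctuation part; the underlying estimates are identical.
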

\begin{proof}
We develop the proof for the predator population. The proof for the prey population is obtained in a similar manner and it is omitted. 

\medskip

Let us fix $\epsilon,M>0$ and $\zeta_1,\zeta_2,\tilde{\zeta}_1>1$ satisfying \ref{eq:ineq-zeta} and \ref{eq:continuity-epsilon} in the proof of Theorem \ref{thm:coexistence-positive}, and consider again the events $A_n=\{\zeta_1Z_n<Z_{n+1}<\zeta_2Z_n, \ \tilde{\zeta}_1\tZ_n<\tZ_{n+1}\}$, with $n\in\N_0$ and $A=\cap_{n=0}^{\infty} A_n$, as in the proof of Proposition~\ref{prop:existencia de A}. Let us also fix initial values $i,j\in\N$. 

First, since $\tilde{\zeta}_1>\zeta_2$ we have that $(\tilde{\zeta}_1/\zeta_2)^n\to\infty$, as $n\to\infty$, and then there exists $n_0\in\N$ such that $(\tilde{\zeta}_1/\zeta_2)^n\geq Mi/j$, for every $n\geq n_0$. As a consequence, for every $n\geq n_0$,
$$\frac{\tZ_n}{Z_n}>\frac{\tilde{\zeta}_1\tZ_{n-1}}{\zeta_2Z_{n-1}}>\ldots>\Big(\frac{\tilde{\zeta}_1}{\zeta_2}\Big)^n\frac{j}{i}
\geq M, \ \text{ a.s. on } A,$$ 
and therefore for $n\geq n_0$, 
\begin{align}\label{eq:conv-rho2-r-A}
|\rho_2\mu-r(\tZ_n/Z_n)\mu|\leq\frac{\epsilon}{2} \ \text{ a.s. on } A.
\end{align}

Let us now define the events $B_n=\{|Z_{n+1}-\rho_2\mu Z_n|\geq \epsilon Z_n\}, \ n\in \N_0$. Taking into account Proposition \ref{prop:existencia de A}~\ref{prop:liminf greater than 1}, \eqref{eq:conv-rho2-r-A} and by using the Chebyshev's inequality, we obtain that on $A$,
\begin{align*}
\sum_{n=n_0}^{\infty}P\big(B_n|\mathcal{F}_n)&=\sum_{n=n_0}^{\infty}P(|Z_{n+1}-\rho_2\mu Z_n|\geq \epsilon Z_n|\mathcal{F}_n\big)\\
&\leq\sum_{n=n_0}^{\infty}P\big(|Z_{n+1}-E[Z_{n+1}|\mathcal{F}_n]|\geq \epsilon Z_n-|\rho_2\mu-r(\tZ_n/Z_n)\mu| Z_n|\mathcal{F}_n\big)\\
&\leq\sum_{n=n_0}^{\infty}P\big(|Z_{n+1}-E[Z_{n+1}|\mathcal{F}_n]|\geq Z_n\epsilon/2|\mathcal{F}_n\big)\\
&\leq 4\sum_{n=n_0}^{\infty}\frac{Var[Z_{n+1}|\mathcal{F}_n]}{\epsilon ^2Z_n^{2}}\\
&= C\sum_{n=n_0}^{\infty}\frac{1}{Z_n}<\infty \ \text{ a.s.},
\end{align*} 

\noindent for some constant $C>0$. We conclude the proof by appealing to conditional Borel-Cantelli lemma
\begin{align}\label{eq:lim-cocte-predator-A}
A
&=\left\{\sum_{n=0}^{\infty}P(B_n|\mathcal{F}_n)<\infty\right\}\nonumber\\
&\subseteq  \liminf_{n\to\infty} B_n^c\nonumber\\
&=\liminf_{n\to\infty}\left\{\left|\frac{Z_{n+1}}{Z_n}-\rho_2\mu\right|<\epsilon\right\}\nonumber\\
&=\left\{\left|\frac{Z_{n+1}}{Z_n}-\rho_2\mu\right|<\epsilon \ \text{eventually}\right\} \text{ a.s.}
\end{align}
\end{proof}

\begin{lema}\label{lema:O-coexistence-II}
Let $\{(Z_n,\tZ_n)\}_{n\in\N_0}$ be a PPDDBP. If $\trho_2\tilde{\mu}>\rho_2\mu$, then, for each $0<\beta<1/2$,
\begin{align*}
\frac{Z_{n+1}}{Z_n}=r\left(\frac{\tZ_{n}}{Z_n}\right)\mu + O(Z_n^{-\beta})\ \text{ a.s.,}\quad\text{ and }\quad 
\frac{\tZ_{n+1}}{\tZ_n}=\tr\left(\frac{\tZ_{n}}{Z_n}\right)\tmu + O(\tZ_n^{-\beta})\ \text{ a.s.,}
\end{align*}
as $n\to\infty$ on $A$, with $A$ as in Proposition~\ref{prop:existencia de A}.
\end{lema}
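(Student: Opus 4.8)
The plan is to follow the same Chebyshev-plus-conditional-Borel--Cantelli scheme used in the proof of Lemma~\ref{lema:lim del cociente}, but with a threshold that decays only polynomially rather than being a fixed fraction of $Z_n$. I treat the predator statement; the prey statement is identical after exchanging the roles of the species. Observe first that, by Proposition~\ref{prop:moments-PPBP}\ref{prop:moments-PPBP-i}, $E[Z_{n+1}\mid\F_n]=Z_n\,r(\tZ_n/Z_n)\mu$ a.s., so the assertion $Z_{n+1}/Z_n=r(\tZ_n/Z_n)\mu+O(Z_n^{-\beta})$ is equivalent to $\bigl|Z_{n+1}-E[Z_{n+1}\mid\F_n]\bigr|<Z_n^{1-\beta}$ for all sufficiently large $n$, a.s. on $A$. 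Note that, unlike in Lemma~\ref{lema:lim del cociente}, we compare $Z_{n+1}/Z_n$ to $r(\tZ_n/Z_n)\mu$ itself rather than to the limit $\rho_2\mu$, so no preliminary approximation of $r(\tZ_n/Z_n)$ by $\rho_2$ is needed. Thus it suffices to show that, a.s. on $A$, only finitely many of the events $B_n=\{|Z_{n+1}-E[Z_{n+1}\mid\F_n]|\ge Z_n^{1-\beta}\}$ occur.

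Next I would estimate $P(B_n\mid\F_n)$ by Chebyshev's inequality together with the conditional variance from Proposition~\ref{prop:moments-PPBP}\ref{prop:moments-PPBP-ii}:
\[
P(B_n\mid\F_n)\le \frac{\V[Z_{n+1}\mid\F_n]}{Z_n^{2(1-\beta)}}.
\]
Since $r(\cdot)$ takes values in $(0,1)$, one has $\V[Z_{n+1}\mid\F_n]=Z_n r(\tZ_n/Z_n)\sigma^2+Z_n r(\tZ_n/Z_n)(1-r(\tZ_n/Z_n))\mu^2\le C\,Z_n$ with $C=\sigma^2+\mu^2$, whence $P(B_n\mid\F_n)\le C\,Z_n^{2\beta-1}$ a.s.

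The decisive step is to exploit the geometric growth of $Z_n$ guaranteed on $A$. By construction $A=\cap_{n\ge 0}A_n$ with $A_n\subseteq\{Z_{n+1}>\zeta_1 Z_n\}$ and $\zeta_1>1$ (as in Proposition~\ref{prop:existencia de A}\ref{prop:liminf greater than 1}), so on $A$ we have $Z_n\ge \zeta_1^{\,n} Z_0$ for every $n$. Because $\beta<1/2$, the exponent $2\beta-1$ is strictly negative, and therefore
\[
\sum_{n}P(B_n\mid\F_n)\le C\sum_n Z_n^{2\beta-1}\le C\,Z_0^{\,2\beta-1}\sum_n \bigl(\zeta_1^{\,2\beta-1}\bigr)^{n}<\infty\quad\text{on }A,
\]
the last series being geometric with ratio $\zeta_1^{\,2\beta-1}<1$. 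I would then invoke the conditional Borel--Cantelli lemma exactly as in \eqref{eq:lim-cocte-predator-A}: the inclusion $A\subseteq\{\sum_n P(B_n\mid\F_n)<\infty\}\subseteq\{B_n\text{ occurs finitely often}\}$ a.s. yields $|Z_{n+1}/Z_n-r(\tZ_n/Z_n)\mu|<Z_n^{-\beta}$ eventually, which is the claim.

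I do not expect a serious obstacle here: the argument is routine once the moment bounds are in place. The only point requiring care is the interplay between the hypothesis $\beta<1/2$ and the geometric growth on $A$: it is precisely $2\beta-1<0$ that renders $\sum_n Z_n^{2\beta-1}$ summable, and the polynomial threshold $Z_n^{1-\beta}$—as opposed to the linear threshold $\eps Z_n$ of Lemma~\ref{lema:lim del cociente}—is what sharpens the conclusion from mere convergence of $Z_{n+1}/Z_n$ to the explicit rate $O(Z_n^{-\beta})$. For the prey statement the same computation applies verbatim with $\tr$, $\tmu$, $\tsigma^2$ and $\tZ_n$, using $\tilde\zeta_1>1$ in place of $\zeta_1$.
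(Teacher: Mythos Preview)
Your proposal is correct and follows essentially the same route as the paper: define $B_n=\{|Z_{n+1}-E[Z_{n+1}\mid\F_n]|\ge Z_n^{1-\beta}\}$, bound $P(B_n\mid\F_n)\le C\,Z_n^{2\beta-1}$ via Chebyshev and Proposition~\ref{prop:moments-PPBP}, and conclude by conditional Borel--Cantelli once $\sum_n Z_n^{2\beta-1}<\infty$ on $A$. The only cosmetic difference is that the paper appeals to the ratio test for this last summability, whereas you use the geometric lower bound $Z_n\ge \zeta_1^{\,n}Z_0$ directly; both are immediate from the definition of $A$.
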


\begin{proof}
We provide the proof for the predator population; similar arguments enables us to conclude for the prey population.
We fix $\beta\in (0,1/2)$ and consider the events:
$$B_n=\{Z_n^{1-\beta}\leq |Z_{n+1}-r(\tZ_n/Z_n)\mu Z_n|\},\quad n\in\N_0.$$
We shall prove that
\begin{align*}
\sum_{n=0}^{\infty}P(B_n|\mathcal{F}_n)<\infty \ \text{ a.s.\quad on } A,
\end{align*}
and by conditional Borel-Cantelli Lemma we get
\begin{align}\label{eq:liminf-coexistence-predator-r}
A\subseteq \left\{\sum_{n=0}^{\infty}P(B_n|\mathcal{F}_n)<\infty\right\}
\subseteq  \liminf_{n\to\infty} B_n^c
=\bigcup_{k=0}^\infty \bigcap_{n=k}^\infty\left\{\left|\frac{Z_{n+1}}{Z_n}-r\left(\frac{\tZ_{n}}{Z_n}\right)\mu\right|<\frac{1}{Z_n^{\beta}}\right\} \text{ a.s.}
\end{align}
and the proof finishes. 

To determine a suitable bound for $P(B_n|\mathcal{F}_n)$, we use Proposition~\ref{prop:moments-PPBP}~\ref{prop:moments-PPBP-i}, the bounds for the function $r(\cdot)$, and Chebyschev's inequality. We therefore obtain
\begin{align*}
P\big(B_n |\mathcal{F}_n\big)& = P\big(Z_n^{1-\beta}\leq |Z_{n+1}-r(\tZ_n/Z_n)\mu Z_n| \big|\mathcal{F}_n\big)\\
&= P\big(Z_n^{1-\beta}\leq |Z_{n+1}-E[Z_{n+1}|\mathcal{F}_n]|\big|\mathcal{F}_n\big)\\
&\leq \frac{Var[Z_{n+1}|\mathcal{F}_n]}{Z_n^{2(1-\beta)}}\\
&\leq \frac{C}{Z_n^{1-2\beta}}\ \text{ a.s.},
\end{align*}
for some constant $C>0$ on $A$. Now, the result follows from the fact that 
\begin{align*}
\sum_{n=0}^{\infty}\frac{1}{Z_n^{1-2\beta}}<\infty\ \text{ a.s.\quad on } A,
\end{align*}
and this can be checked with the ratio test. 
\end{proof}

\medskip

\begin{Prf}[Theorem~\ref{teor:GR coexistence}]
We start the proof by taking $\epsilon>0$ and $\zeta_1,\zeta_2,\tilde{\zeta}_1>1$ as in the proof of Theorem \ref{thm:coexistence-positive}, satisfying \ref{eq:ineq-zeta} and \ref{eq:continuity-epsilon}, and denote $\tilde{\zeta}_2=\trho_2\tmu+\epsilon$. By \eqref{eq:lim-cocte-predator-A} and \eqref{eq:liminf-coexistence-predator-r}, and the equivalent results for the prey population we have 
\begin{align*}
A \subseteq\cup_{k=0}^\infty  B_k\ \text{ a.s.},
\end{align*}
with
\begin{align*}
B_k&=\bigcap_{n=k}^\infty\Bigg(\bigg\{\bigg|\frac{Z_{n+1}}{Z_n}-r\bigg(\frac{\tZ_n}{Z_n}\bigg)\mu\bigg|<\frac{1}{Z_n^{\beta}}\bigg\}\cap \bigg\{\bigg|\frac{\tZ_{n+1}}{\tZ_n}-\tr\bigg(\frac{\tZ_n}{Z_n}\bigg)\tilde{\mu}\bigg|<\frac{1}{\tZ_n^{\beta}}\bigg\}\\
&\phantom{=\bigcap_{n=k}^\infty}\cap \bigg\{\bigg|\frac{Z_{n+1}}{Z_n}-\rho_2\mu\bigg|<\epsilon\bigg\}\cap \bigg\{\bigg|\frac{\tZ_{n+1}}{\tZ_n}-\trho_2\tilde{\mu}\bigg|<\epsilon\bigg\}\Bigg)\\
&=\bigcap_{n=k}^\infty\Bigg(\bigg\{1-\frac{1}{r(\tZ_n/Z_n)\mu Z_n^{\beta}}<\frac{Z_{n+1}}{r(\tZ_n/Z_n)\mu Z_n}<1+\frac{1}{r(\tZ_n/Z_n)\mu Z_n^{\beta}}\bigg\}\\
&\phantom{=\bigcap_{n=k}^\infty}\cap \bigg\{1-\frac{1}{\tr(\tZ_n/Z_n)\tmu \tZ_n^{\beta}}<\frac{Z_{n+1}}{\tr(\tZ_n/Z_n)\tmu \tZ_n}<1+\frac{1}{\tr(\tZ_n/Z_n)\tmu \tZ_n^{\beta}}\bigg\}\\
&\phantom{=\bigcap_{n=k}^\infty}\cap \bigg\{\zeta_1< \frac{Z_{n+1}}{Z_n}<\zeta_2\bigg\}\cap \bigg\{\tilde{\zeta}_1< \frac{\tZ_{n+1}}{\tZ_n}<\tilde{\zeta}_2\bigg\}\Bigg),\quad k\in\N_0.
\end{align*}

From the definition of $B_k$ is immediate to see that for each $k\in\N_0$ and $n\geq k$,
$$Z_{n+1}>Z_n\zeta_1>Z_{n-1}\zeta_1^2>\ldots >Z_{k}\zeta_1^{n+1-k}\ \text{ a.s. on }B_k,$$
then $Z_{n}^{\beta}>\zeta_1^{(n-k)\beta}Z^{\beta}_{k}$, and similarly $\tZ_{n}^{\beta}>\tilde{\zeta}_1^{(n-k)\beta}\tZ^{\beta}_{k}$, and therefore
$$\frac{1}{Z_{n}^{\beta}}<\frac{1}{\zeta_1^{(n-k)\beta}Z^{\beta}_{k}},\quad\text{ and }\quad \frac{1}{\tZ_{n}^{\beta}}<\frac{1}{\tilde{\zeta}_1^{(n-k)\beta}\tZ^{\beta}_{k}}\ \text{ a.s. on }B_k.$$

From this and using the bounds of the functions $r(\cdot)$ and $\tr(\cdot)$, for each $k\in\N_0$, we have
\begin{align*}
B_k&\subseteq \bigcap_{n=k}^\infty\Bigg(\bigg\{1-\frac{1}{\rho_1\mu\zeta_1^{(n-k)\beta}Z^{\beta}_{k}}<\frac{Z_{n+1}}{r(\tZ_n/Z_n)\mu Z_n}<1+\frac{1}{\rho_1\mu\zeta_1^{(n-k)\beta}Z^{\beta}_{k}}\bigg\}\\
&\phantom{\subseteq \bigcap_{n=k}^\infty}\cap \bigg\{1-\frac{1}{\trho_1\tmu\tilde{\zeta}_1^{(n-k)\beta}\tZ^{\beta}_{k}}<\frac{\tZ_{n+1}}{\tr(\tZ_n/Z_n)\tmu \tZ_n}<1+\frac{1}{\trho_1\tmu\tilde{\zeta}_1^{(n-k)\beta}\tZ^{\beta}_{k}}\bigg\}\Bigg)\subseteq B_k',
\end{align*}
with 
\begin{align*}
B_k'&=\bigg\{\prod_{n=k}^{\infty}\left(1-\frac{1}{\rho_1\mu\zeta_1^{(n-k)\beta}Z^{\beta}_{k}}\right)<\prod_{n=k}^{\infty}\frac{Z_{n+1}}{r(\tZ_n/Z_n)\mu Z_n}<\prod_{n=k}^{\infty}\left(1+\frac{1}{\rho_1\mu\zeta_1^{(n-k)\beta}Z^{\beta}_{k}}\right)\bigg\}\\
&\phantom{=}\cap \bigg\{\prod_{n=k}^{\infty}\left(1-\frac{1}{\trho_1\tmu\tilde{\zeta}_1^{(n-k)\beta}\tZ^{\beta}_{k}}\right)<\prod_{n=k}^{\infty}\frac{\tZ_{n+1}}{\tr(\tZ_n/Z_n)\tmu \tZ_n}<\prod_{n=k}^{\infty}\left(1+\frac{1}{\trho_1\tmu\tilde{\zeta}_1^{(n-k)\beta}\tZ^{\beta}_{k}}\right)\bigg\}.
\end{align*}

\medskip

If we prove that for each $k\in\N_0$,
\begin{align}\label{eq:conv-prod-coexistence}
0<\prod_{n=k}^{\infty}\frac{Z_{n+1}}{r(\tZ_n/Z_n)\mu Z_n}<\infty,\quad\text{ and }\quad 0<\prod_{n=k}^{\infty}\frac{\tZ_{n+1}}{\tr(\tZ_n/Z_n)\tilde{\mu}\tZ_n}<\infty\ \text{ a.s.\quad on }A\cap B_k',
\end{align}
and
\begin{align}\label{eq:conv-prod-r-rhos}
0<\prod_{n=0}^{\infty}\frac{r(\tZ_n/Z_n)}{\rho_2}<\infty,\quad\text{ and }\quad 0<\prod_{n=0}^{\infty}\frac{\tr(\tZ_n/Z_n)}{\trho_2}<\infty\ \text{ a.s.\quad on }A,
\end{align}
then
$$0<\prod_{n=0}^{\infty}\frac{\tZ_{n+1}}{\trho_2\tilde{\mu}\tZ_n}<\infty,\quad \text{ and }\quad 0<\prod_{n=0}^{\infty}\frac{\tZ_{n+1}}{\trho_2\tilde{\mu}\tZ_n}<\infty\ \text{ a.s.\quad on }A.$$
Consequently, given that for each $n\in\mathbb{N}$, we can write
$$\frac{Z_n}{(\rho_2\mu)^n}=Z_0\prod_{l=0}^{n-1}\frac{r(\tZ_l/Z_l)}{\rho_2}\cdot\frac{Z_{l+1}}{r(\tZ_l/Z_l)\mu Z_{l}},\quad \text{ and }\quad \frac{\tZ_n}{(\trho_2\tilde{\mu})^n}=\tZ_0\prod_{l=0}^{n-1}\frac{\tr(\tZ_l/Z_l)}{\trho_2}\cdot\frac{\tZ_{l+1}}{\tr(\tZ_l/Z_l)\tilde{\mu}\tZ_{l}},$$
and the result is proved with $\displaystyle W=Z_0\prod_{n=1}^{\infty}\frac{Z_{n+1}}{\rho_2\mu Z_{n}}$, and $\displaystyle \widetilde{W}=\tZ_0\prod_{n=1}^{\infty}\frac{\tZ_{n+1}}{\trho_2\tilde{\mu}\tZ_{n}}$.

\bigskip

On the one hand, to prove \eqref{eq:conv-prod-coexistence} we apply again Theorem 7.28 in \cite{Stromberg} and use the fact that 
$$\sum_{l=0}^\infty \frac{1}{\zeta_1^{l\beta}}<\infty\,\quad\text{ and }\quad \sum_{l=0}^\infty \frac{1}{\tilde{\zeta}_1^{l\beta}}<\infty,$$
because both $\zeta_1>1$, and $\tilde{\zeta}_1>1$, and hence,
$$\sum_{n=k}^\infty\frac{1}{\rho_1\mu\zeta_1^{(n-k)\beta}Z^{\beta}_{k}}<\infty,\quad\text{ and }\quad \sum_{n=k}^\infty\frac{1}{\trho_1\tmu\tilde{\zeta}_1^{(n-k)\beta}\tZ^{\beta}_{k}}<\infty\ \text{ a.s.\quad on }A.$$
%

Finally, to finish the proof we note that \eqref{eq:conv-prod-r-rhos} holds by condition \eqref{eq:cond-sum-rs}.
\end{Prf}

\bigskip

\section{Proofs of the results in Section \ref{sec:carrying}}\label{ape:carrying}

\begin{Prf}[Theorem~\ref{thm:as-extinction-carrying}]
Let us fix arbitrary initial values $i,j\in\N$, and to lighten the notation let us drop the parameter $K$ in the definition of $\ts(\cdot,K)$, and write simply $\ts(\cdot)$. 

First of all, we note that Proposition~\ref{prop:prey-predator-fixation}~\ref{prop:predator-fixation} also holds in this case, that is, predator fixation is not possible. 
Again, this is justified with the same arguments as in the aforementioned result.   
%
%
To conclude the proof, it is enough to show that
\begin{align*}
P_{(i,j)}(\tZ_n\to \infty)=0.
\end{align*}

Let us compute the conditional expectation of $E[\tZ_{n+1}|\mathcal{F}_n]$, where $\mathcal{F}_n$ was defined at the beginning of the appendix, 
\begin{align*}
E[\tZ_{n+1}|\mathcal{F}_n]
&=\tmu E\left[\tphi_n(Z_n,\tilde{\phi}_{n}(\tZ_n))|\mathcal{F}_n\right]\nonumber\\
&=\tmu E\left[ \tilde{\phi}_{n}(\tZ_n) \tr(\tilde{\phi}_{n}(\tZ_n)/Z_n)|\mathcal{F}_n\right]\nonumber\\
&\leq \trho_2\tmu E\left[ \tilde{\phi}_{n}(\tZ_n)|\mathcal{F}_n\right]\nonumber\\
&=\trho_2\tmu \ts(\tZ_n)\tZ_n\quad a.s.
\end{align*}
By \eqref{eq:cond-s-extinct}, there exists $M\in\N$ such that $\ts(\tz)<1/(\trho_2\tmu)$, for every $\tz>M$, and then
\begin{align*}\label{eq:moment-carrying}
E[\tZ_{n+1}|\mathcal{F}_n]\leq \trho_2\tmu \ts(\tZ_n)\tZ_n<\tZ_n\quad \text{ a.s. on }\{\tZ_n>M\}.
\end{align*}
Now the proof continues with similar arguments to those in the proof of Theorem \ref{thm:coexistence-0} and the result follows.

\end{Prf}

\bigskip


\end{document}